\newtheorem{thm}{Theorem}[section]
\newtheorem{lem}[thm]{Lemma}
\newtheorem{prop}[thm]{Proposition}
\theoremstyle{remark}
\newcommand{\be}{\begin{equation}}
\newcommand{\ee}{\end{equation}}
\newcommand{\bea}{\begin{eqnarray}}
\newcommand{\eea}{\end{eqnarray}}
\newcommand{\Bea}{\begin{eqnarray*}}
\newcommand{\Eea}{\end{eqnarray*}}
\def\CL{{\mathcal L}}
\def\C{{\mathbb C}}
\def\H{{\mathbb H}}
\def\N{{\mathbb N}}
\def\R{{\mathbb R}}
\def\Z{{\mathbb Z}}
\def\1{\text{\bf {1}}}
\begin{document}

\title[$L^p$ boundedness for the solution of wave equation]
{ $L^p$ estimates for the wave equation associated to the Grushin operator}
\author{K. Jotsaroop and S. Thangavelu }

\address{Department of Mathematics\\ Indian Institute
of Science\\Bangalore-560 012}
\email{jyoti@math.iisc.ernet.in, veluma@math.iisc.ernet.in}

\date{\today}
\keywords{}
\subjclass{42C, 42C10, 43A90, 46E30, 46E40, 47D09}
\thanks{ }

\begin{abstract}
We prove that the solution of the wave equation associated to the
Grushin operator $ G = -\Delta-|x|^2 \partial_t^2 $ is bounded on
$ L^p(\R^{n+1}) (1<p<\infty),$ when $|\frac{1}{p}-\frac{1}{2}|<\frac{1}{n+2}.$


\end{abstract}

\maketitle

\section{Introduction}
\setcounter{equation}{0}

Consider the solution of the initial value problem
$$ \partial_t^2 u(x,t)
= \Delta u(x,t), u(x,0) = 0, \partial_t u(x,0) = f(x) $$
for the standard
wave equation associated to the Laplacian on $ \R^n.$ Representing the
solution as $ u(x,t) = \frac{\sin t\sqrt{-\Delta}}{\sqrt{-\Delta}}f(x) $ one
can investigate the $ L^p $ mapping properties of
$\frac{\sin t\sqrt{-\Delta}}{\sqrt{-\Delta}}.$ This problem has been studied by several authors: Peral \cite{JP} and Miyachi \cite{AM} have obtained  sharp
range of $p,$ viz.  $|\frac{1}{p}-\frac{1}{2}|\leq\frac{1}{n-1},$ for which
$ \frac{\sin t\sqrt{-\Delta}}{\sqrt{-\Delta}} $ is bounded on $ L^p(\R^n).$
Other $ L^p-L^q $ estimates were considered e.g. by Strichartz \cite{RS}. The
case of the Hermite operator $ -\Delta+|x|^2 $ has been treated by one of us
\cite{NT} and more general operators of the form
$ -\Delta+V $ by Zhong \cite{JZ}. In all these cases optimal range of $ p $
for which the solution operator is bounded on $ L^p(\R^n)$ is known.

All the operators mentioned above are elliptic but results for operators from
the subelliptic case are also available. The wave equation associated to the
sublaplacian $ \CL $ on the Heisenberg group $ \H^n $ has been studied by Mueller and Stein \cite{MS}. They have shown that the solution operator
$\frac{\sin t\sqrt{\CL}}{\sqrt{\CL}}$ is bounded on $ L^p(\H^n) $ for all
$p$ satisfying $|\frac{1}{p}-\frac{1}{2}|<\frac{1}{d-1} $ where $ d =2n+1 $
is the Euclidean dimension of $ \H^n.$ The interesting
point to note here is the appearance of the $ d-1 $ rather that $ Q-1 $ where
$ Q = 2n+2 $ is the homogeneous dimension. The weaker result with $ Q-1 $ in place of $ d-1$ is known from earlier works. Also when we consider only
functions on the Heisenberg group which are band limited in the central
variable the range can be further extended to
$|\frac{1}{p}-\frac{1}{2}|<\frac{1}{2n-1} $ as was shown in \cite{NS}.

In this article we are interested in the wave equation associated to the
Grushin operator $  G = -\Delta-|x|^2 \partial_t^2 $
on $ \R^{n+1}.$ Though this operator is very similar to the sublaplacian with
a very explicit spectral decomposition study of spectral multipliers poses
formidable problems due to a lack of group structure on $ \R^{n+1} $
compatible with the operator. However, $ G $ can be obtained from $ \CL $
on the Heisenberg group via a certain representation and hence in principle
transference techniques can be used to prove weaker versions of multiplier
theorems. As the dimension of $ \H^n $ is $ (2n+1) $ whereas $ G $ lies on an
$ n+1 $ dimensional space results obtained via transference are far from
optimal. In a recent work \cite{JST} the authors have studied multipliers
associated to $ G $ using operator valued Fourier multipliers.

The study of wave equation associated to the Grushin operator in one
dimension has been initiated by Ralf Meyer \cite{RM}. In his unpublished
thesis written
under the guidance of Detlef Mueller, he has proved the following theorem.
He considers the class of functions which are supported in
$S_{\mathcal{C}_{1}}=\{(x,t):|x|\leq \mathcal{C}_{1}\}.$

\begin{thm} For every $\mathcal{C}_{1},$ $s>0,$ $1\leq p\leq\infty$ and
$\alpha > |\frac{1}{p}-\frac{1}{2}|$
there exists a constant $ C = C_{p,s,\mathcal{C}_{1}}^{\alpha}$
such that for all $f\in L^p(\R^2)$ with support contained  in
$S_{\mathcal{C}_{1}}$ the estimates
$$\|\frac{\cos s\sqrt{G}}{(1+G)^{\alpha/2}}f\|_{L^p(\R^2)}\leq C
 \|f\|_{L^p(\R^2)},$$
and
$$\|\frac{\sin s\sqrt{G}}{\sqrt{G}(1+G)^{(\alpha-1)/2}}f\|_{L^p(\R^2)}\leq
C \|f\|_{L^p(\R^2)}$$
are valid.
\end{thm}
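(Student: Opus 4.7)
The plan is to combine Stein's complex interpolation with Peral--Miyachi-style kernel estimates adapted to the subelliptic geometry of $G$. Both operators in the statement are treated in parallel by embedding them in analytic families. For the cosine operator I would consider
\[
T_z f := \frac{\cos(s\sqrt{G})}{(1+G)^{z/2}}\,f,
\]
and for the sine operator the analogous family with $\sin(s\sqrt{G})/\sqrt{G}$ in place of the numerator (well-defined since $\sin(s\sqrt{\xi})/\sqrt{\xi}$ extends smoothly to $\xi=0$). On the line $\mathrm{Re}(z)=0$ the symbol is bounded on the spectrum of $G$, so $\|T_{i\tau}\|_{L^2\to L^2}\le C$ uniformly in $\tau\in\R$ by the spectral theorem.

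The core analytic input is the corresponding $L^1$ bound on the line $\mathrm{Re}(z)=1/2$,
\[
\|T_{1/2+i\tau}f\|_{L^1(\R^2)}\le C(1+|\tau|)^{N}\|f\|_{L^1(\R^2)}\qquad (\mathrm{supp}\,f\subset S_{\mathcal{C}_{1}}).
\]
Stein's theorem interpolates these two endpoints to give $\|T_\alpha\|_{L^p\to L^p}<\infty$ for every $\alpha>|\tfrac1p-\tfrac12|$ in the range $1\le p\le 2$; the range $p>2$ follows by duality. For the sine operator the power $(\alpha-1)/2$ in the statement is accommodated by a trivial shift of the analytic parameter.

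For the $L^1$ endpoint I would proceed in four steps. (i) Apply the partial Fourier transform in $t$, which diagonalises $G$ into the Hermite family $H_\lambda=-\partial_x^2+\lambda^2 x^2$ with orthonormal eigenfunctions $|\lambda|^{1/4}h_k(|\lambda|^{1/2}x)$ and eigenvalues $(2k+1)|\lambda|$. (ii) Perform a Littlewood--Paley decomposition of the multiplier in the scale $\sqrt G\sim 2^j$, writing the Schwartz kernel of each dyadic piece by combining Mehler's formula for $H_\lambda$ with the oscillatory factor $\cos(s\sqrt G)$. (iii) Run a stationary-phase analysis on the resulting oscillatory integral together with the finite speed of propagation of $\cos(s\sqrt G)$ in the Carnot--Carath\'eodory metric of $G$ to obtain off-diagonal decay of each piece. (iv) Sum the $L^1$ bounds in $j$ at the critical exponent $z=\tfrac12+i\tau$. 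The support hypothesis $|x|\le\mathcal{C}_{1}$ is used both to bound the contribution of small $|\lambda|$ (equivalently, low Hermite modes) and to keep the analysis away from the degeneracy set $x=0$.

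The main obstacle is step (iii): obtaining kernel estimates uniform across the Hermite scales $\lambda$. The Mehler kernel is singular for small $|\lambda|$, the sub-Riemannian balls of $G$ degenerate as $x\to 0$, and the oscillation of the wave factor must be exploited in a way that handles both features simultaneously; the compact support of $f$ in $x$ is precisely what allows a uniform treatment.
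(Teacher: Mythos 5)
First, a point of comparison: the paper does not prove this theorem at all --- it is Meyer's result, quoted from his unpublished thesis, and the paper only records that Meyer obtained it by adapting the Mueller--Stein analysis of the wave equation on the Heisenberg group, through ``extremely delicate'' kernel estimates that are possible only under the support hypothesis. Your outline follows the same broad strategy (analytic interpolation between an $L^2$ bound from the spectral theorem and an $L^1$ bound on a critical line, the latter coming from oscillatory-integral estimates on the wave kernel of $G$), and the numerology is right: interpolating $\mathrm{Re}\,z=0$ against $\mathrm{Re}\,z=\frac12+\eps$ does yield every $\alpha>|\frac1p-\frac12|$ for $1\le p\le 2$. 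But the proposal does not prove the one thing that constitutes the theorem: the uniform $L^1$ kernel estimate at the critical line. Steps (ii)--(iv) are a list of tools (Mehler's formula, Littlewood--Paley in $\sqrt G$, stationary phase, finite propagation speed), and you yourself flag step (iii) as the main obstacle; that step is exactly the delicate part that occupies Meyer's work, and nothing in the sketch indicates how the oscillatory integrals are controlled uniformly in the Hermite parameter $\lambda$, nor how the dyadic $L^1$ bounds are summed at the critical exponent. As written this is a plan, not a proof.

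Two further soft spots. (a) The claim that ``$p>2$ follows by duality'' is not immediate: the theorem bounds $m(G)\chi_{S_{\mathcal{C}_1}}$ on $L^p$ (cutoff on the input), whose adjoint is $\chi_{S_{\mathcal{C}_1}}\overline{m}(G)$ (cutoff on the output); this is not the statement of the theorem with $p'$ in place of $p$, so the range $p>2$ needs either a separate kernel estimate of the form $\sup_y\int_{|x|\le\mathcal{C}_1}|K(x,y)|\,dx<\infty$ or some other argument. (b) Your description of the role of the support hypothesis is off: $|x|\le\mathcal{C}_1$ is an upper bound on $|x|$, so it does not ``keep the analysis away from the degeneracy set $x=0$''. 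Its actual function, as the paper's closing remarks explain, is that on a compact set $B$ the one-dimensional Hermite functions satisfy $\sup_{x\in B}h_k(x)^2\le C(2k+1)^{-1/2}$, whereas globally one only has the $k^{-1/6}$ behaviour coming from the turning point $|x|\sim\sqrt{2k+1}$; it is this improved eigenfunction (hence spectral projection) bound on compact sets that makes the one-dimensional case tractable, and any completed proof along your lines would have to make this quantitative gain the engine of the $L^1$ estimate rather than a side remark.
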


Meyer has proved the above theorem by following the approach used by Mueller
and Stein for the Heisenberg group. By a very careful analysis of certain 
kernels he obtained
extremely  delicate estimates which were possible only under some
assumptions on the support. It is almost impossible either to get rid of
this assumption or to use the same method in higher dimensions. Fortunately,
there is an alternate approach which we have used elsewhere in studying
multipliers for the Grushin operator. That approach allows us to prove
$ L^p $ estimates for the wave equation associated to higher dimensional
Grushin operators.

The idea is to consider multipliers for $ G $ as operator valued multipliers
for the one dimensional Euclidean Fourier transform. To elaborate on this let
us consider the spectral decomposition of $ G.$ Let
$$ f^\lambda(x) = \int_{-\infty}^\infty f(x,t) e^{i\lambda t} dt $$ stand for
the inverse Fourier transform of $ f(x,t) $ in the $ t $ variable. Then by applying $ G $ to the inversion formula
$$ f(x,t) = \frac{1}{2\pi} \int_{-\infty}^\infty e^{-i\lambda t}
f^\lambda(x) d\lambda $$ we see that
$$ Gf(x,t) = \frac{1}{2\pi} \int_{-\infty}^\infty e^{-i\lambda t} H(\lambda)
f^\lambda(x) d\lambda $$
where $ H(\lambda) = -\Delta+\lambda^2 |x|^2 $ is the scaled Hermite operator
on $ \R^n.$ The spectral decomposition $  H(\lambda) $ is explicitly known
and given by
$$  H(\lambda) = \sum_{k=0}^\infty (2k+n)|\lambda| P_k(\lambda) $$ where
$ P_k(\lambda)$ are the Hermite projections, see \cite{ST}. (We will say more
about these projections later in the paper). Consequently the spectral
decomposition of $ G $ is written as
$$ Gf(x,t) = \frac{1}{2\pi}\int_{-\infty}^\infty  e^{-i\lambda t} \left(
\sum_{k=0}^\infty (2k+n)|\lambda| P_k(\lambda)f^\lambda(x)\right)
d\lambda .$$

Given a bounded function $ m $ on the spectrum of $ G $ which is just the
half line $ [0,\infty) $ we can define $ m(G) $ by spectral theorem. In view
of the above decomposition we see that
$$ m(G)f(x,t) = \frac{1}{2\pi} \int_{-\infty}^\infty e^{-i\lambda t}
m(H(\lambda))f^\lambda(x) d\lambda $$
where the Hermite multiplier $ m(H(\lambda))$ is given by
$$ m(H(\lambda)) = \sum_{k=0}^\infty m((2k+n)|\lambda|) P_k(\lambda).$$ Let
$ X = L^p(\R^n) $ and identify $ L^p(\R^{n+1}) $ with $ L^p(\R,X),$ the
$ L^p $ space of Banach space valued functions on $ \R.$ With this
identification we see that $ m(G) $ can be considered as a Fourier multiplier
on $ \R $ for $ X $ valued functions, the multiplier being given by
$ m(H(\lambda))$. Of course, we need to assume that $ m(H(\lambda))$ are
uniformly bounded on $ X = L^p(\R^n) $ even for the boundedness of $ m(G) $
on $ L^2(\R,X).$ Further conditions are needed to guarantee the boundedness
of $ m(G) $ on $ L^p(\R,X).$

Fortunately for us the problem of operator valued multipliers has been studied
by L.Weis \cite{LW} and he has obtained some sufficient conditions. The
following theorem has been proved in slightly more general set up. Given
a function $ m$ taking values in $ B(X,Y),$ the space of bounded linear
operators from $ X $ into $ Y $ one can define
$$ T_mf(t) = \frac{1}{2\pi}\int_{-\infty}^\infty e^{i\lambda t} m(\lambda)
\hat{f}(\lambda) d\lambda $$
for all $ f \in L^2(\R,X).$ This operator is clearly bounded from $L^2(\R,X)$
into $L^2(\R,Y)$ provided $ m(\lambda) $ uniformly bounded. For such operators
we have the following result.

\begin{thm} Let  $ X $ and $ Y $ be UMD spaces. Let
$ m:\R^* \rightarrow B(X,Y) $ be a differentiable function such that the
families $ \{ m(\lambda): \lambda \in \R^* \} $ and
$ \{ \lambda \frac{d}{d\lambda}m(\lambda): \lambda \in \R^* \} $
are R-bounded. Then $ m $ defines a Fourier multiplier which is bounded
from $ L^p(\R,X) $ into $ L^p(\R,Y) $ for all $ 1 < p < \infty.$
\end{thm}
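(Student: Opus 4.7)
The plan is to combine a dyadic Littlewood--Paley decomposition with the vector-valued Rademacher characterization of UMD spaces, and then to use the R-boundedness hypothesis to reduce an operator-valued Fourier multiplier bound to a scalar Mikhlin-type estimate on UMD-valued functions.

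First, I would choose $\phi \in C_c^\infty(\R)$ supported in $\{1/2 \leq |\lambda| \leq 2\}$ with $\sum_{k\in\Z}\phi_k \equiv 1$ on $\R^*$, where $\phi_k(\lambda)=\phi(2^{-k}\lambda)$, and decompose $m=\sum_k m_k$ with $m_k = m\,\phi_k$. Since $Y$ is UMD, the vector-valued Littlewood--Paley inequality (a consequence of the boundedness of the Hilbert transform on $L^p(\R,Y)$) gives the Rademacher equivalence
\[
\|T_m f\|_{L^p(\R,Y)} \;\sim\; \E_\epsilon \Bigl\| \sum_k \epsilon_k T_{m_k} f \Bigr\|_{L^p(\R,Y)},
\]
where $\{\epsilon_k\}$ is a Rademacher sequence on an auxiliary probability space. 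Using the analogous equivalence on the $X$-side, and the fact that only finitely many $\phi_j$'s overlap each dyadic annulus, the right-hand side is comparable to $\E_\epsilon \|\sum_k \epsilon_k T_{m_k} f_k\|_{L^p(\R,Y)}$, where $f_k$ is the Littlewood--Paley block of $f$ with frequencies in $\{|\lambda|\sim 2^k\}$.

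Next, R-boundedness is the tool that lets us pull each $T_{m_k}$ out of the Rademacher sum. Writing $m(\lambda)\phi_k(\lambda)$ via the fundamental theorem of calculus as an integral average of values of the R-bounded families $\{m(\mu)\}$ and $\{\mu\,\frac{d}{d\mu}m(\mu)\}$ for $\mu$ in the annulus $\{|\mu|\sim 2^k\}$, Kahane's contraction principle dominates the randomised sum by one in which the operator-valued piece is replaced by a single scalar Fourier multiplier with R-bounded operator coefficients. The resulting scalar symbol satisfies Mikhlin's condition uniformly in $k$, so the scalar Mikhlin multiplier theorem in the UMD-valued setting (Bourgain, McConnell, Zimmermann) delivers the desired $L^p(\R,X)\to L^p(\R,Y)$ bound.

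The main obstacle is the rigorous interchange of the Rademacher expectation with the singular-integral operator, together with the handling of the off-diagonal interaction between Littlewood--Paley blocks when $m(\lambda)$ does not commute with the frequency projections. This is precisely where R-boundedness is indispensable: mere uniform operator-norm boundedness of $\{m(\lambda)\}$ fails to imply $L^p$-multiplier bounds outside the Hilbert-space setting, whereas R-boundedness is the sharp condition ensuring that Rademacher sums of the operators behave like scalars, thereby allowing the scalar Mikhlin machinery to transfer to the operator-valued context.
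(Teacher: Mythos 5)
The paper offers no proof of this statement to compare with: it is Weis's operator-valued Fourier multiplier theorem, quoted verbatim from \cite{LW} and used as a black box, so the only ``paper proof'' is a citation. Judged on its own merits, your outline does follow the standard route to Weis's theorem (randomized Littlewood--Paley decomposition in UMD spaces, a fundamental-theorem-of-calculus representation of $m$ on each dyadic block, and R-boundedness to extract the operator coefficients), but the two central steps are asserted rather than proved, and one of them, as phrased, would fail.

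First, the closing appeal to ``the scalar Mikhlin multiplier theorem in the UMD-valued setting'' does not match what your decomposition produces. Writing $m(\lambda)$ on the dyadic block $I_k$ as $m(a_k)+\int_{I_k}\chi_{\{\mu<\lambda\}}\,m'(\mu)\,d\mu$ and pulling out the operator coefficients $m(a_k)$ and $\mu\,m'(\mu)$ (which is legitimate only by the very definition of R-boundedness, not by Kahane's contraction principle, which handles bounded \emph{scalar} coefficients), the scalar multipliers left behind are the sharp cutoffs $\chi_{(\mu,\infty)}$ localized to $I_k$. These are discontinuous, satisfy no Mikhlin condition, and are not covered by the Bourgain--McConnell--Zimmermann theorem. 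The ingredient actually needed -- and the technical heart of Weis's proof -- is Bourgain's lemma that the family $\{\chi_I(D): I\subset\R \text{ an interval}\}$ is R-bounded on $L^p(\R,X)$ for $X$ UMD; this uses the boundedness of the Hilbert transform on $L^p(\R,\mathrm{Rad}(X))$ (equivalently, a randomized Hilbert transform estimate) together with the contraction principle, which is more than the Hilbert transform on $L^p(\R,Y)$ that your parenthetical invokes for the randomized Littlewood--Paley equivalence. Second, to handle the continuous parameter $\mu$ you must invoke the stability of R-bounds under strong integral means (the absolutely convex hull lemma for R-bounded sets); without it the phrase ``integral average of values of the R-bounded families'' has no force. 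With these two lemmas stated and proved (or properly cited), your sketch becomes essentially Weis's argument; without them the key interchange you yourself flag as ``the main obstacle'' remains unjustified.
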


Note that mere uniform boundedness of $ m(\lambda) $ and
$ \lambda \frac{d}{d\lambda}m(\lambda) $ are not enough to guarantee the
$ L^p $ boundedness of the Fourier multiplier. As the reader may recall they
are sufficient in the scalar case. In most applications of the above theorem,
the crux of the matter lies in proving the
R-boundedness of these families. For our main result we only need to use this
theorem when $ X = Y = L^p(\R^n) $ in which case the R-boundedeness is
equivalent to  vector-valued inequalities for $ m(\lambda) $
and $ \lambda \frac{d}{d\lambda}m(\lambda).$ Indeed, the R-boundedness of
a family of operators $ T(\lambda) $ is equivalent to the inequality
$$ \|\left(\sum_{j=1}^\infty |T(\lambda_j)f_j|^2
\right)^{\frac{1}{2}}\|_p
\leq C \|\left(\sum_{j=1}^\infty |f_j|^2 \right)^{\frac{1}{2}}\|_p $$
for all possible choices of $ \lambda_j \in \R^* $ and
$ f_j \in L^p(\R^n) .$ Thus we only need to verify this vector-valued
inequality for the two families in the theorem.

We consider the following initial value problem for the wave equation:
\be{\partial_s^2u(x,t;s)+Gu(x,t;s)=0 }
\ee $$u(x,t;0)=0, \partial_su(x,t;0)=f(x,t).$$
By using the functional calculus for $G,$ it is easy to see that
the solution of the above equation is given by
$$u(x,t;s)=\frac{\sin s\sqrt{G}}{\sqrt{G}}f(x,t) .$$
Since $G$ is a homogeneous operator of degree $(n+2)$ under the nonisotroic
dilation $D_sf(x,t)=f(sx,s^2t),$ it is enough to consider the case $s=1.$
Our main result is the following theorem.

\begin{thm} Let $ n \geq 2.$ The operator
$\frac{\sin \sqrt{G}}{\sqrt{G}}$ is bounded on $L^p(\R^{n+1})$ for
all $ p $ satisfying $|\frac{1}{p}-\frac{1}{2}|<\frac{1}{n+2}.$
\end{thm}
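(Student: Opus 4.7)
The framework is already laid out in the introduction: invoke Theorem 1.2 with $X=Y=L^p(\R^n)$ (which is UMD for $1<p<\infty$) and the operator-valued symbol $m(\lambda)=M(H(\lambda))$, where $M(\xi)=\frac{\sin\sqrt{\xi}}{\sqrt{\xi}}$. Identifying $L^p(\R^{n+1})\simeq L^p(\R,L^p(\R^n))$, the operator $\frac{\sin\sqrt{G}}{\sqrt{G}}$ is precisely the Fourier multiplier with symbol $m$, so it suffices to prove the R-boundedness of the two families
\begin{equation*}
\CF_1=\{M(H(\lambda)):\lambda\in\R^*\},\qquad \CF_2=\{\lambda\tfrac{d}{d\lambda}M(H(\lambda)):\lambda\in\R^*\}
\end{equation*}
in $B(L^p(\R^n))$. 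By the equivalence recalled just before the main theorem, this amounts to the square-function inequality displayed in the introduction, to be verified uniformly over arbitrary choices of $\lambda_j\in\R^*$ and $f_j\in L^p(\R^n)$.

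Next, I would exploit the exact scaling $H(\lambda)=\delta_\lambda^{-1}(|\lambda|H)\delta_\lambda$, where $\delta_\lambda f(x)=|\lambda|^{n/4}f(|\lambda|^{1/2}x)$ is the $L^2$-isometric dilation adapted to the Hermite operator $H=H(1)$. This reduces $\CF_1$ to dilated copies of the fixed Hermite wave multiplier $M(tH)$. A direct computation from the identity $\xi M'(\xi)=\tfrac12(\cos\sqrt{\xi}-M(\xi))$, together with differentiation of $P_k(\lambda)=\delta_\lambda^{-1}P_k\delta_\lambda$, yields
\begin{equation*}
\lambda\tfrac{d}{d\lambda}M(H(\lambda))=\tfrac12\bigl(\cos\sqrt{H(\lambda)}-M(H(\lambda))\bigr)+\tfrac12\delta_\lambda^{-1}\bigl[M(|\lambda|H),E\bigr]\delta_\lambda,
\end{equation*}
where $E=x\cdot\nabla+n/2$ is the infinitesimal generator of $\delta_\lambda$. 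Decomposing $E$ as a combination of Hermite creation and annihilation operators, the commutator $[M(|\lambda|H),E]$ is itself a Hermite spectral multiplier composed with index-shifting operators of controlled norm between consecutive Hermite eigenspaces.

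The heart of the proof is then a uniform (in $t>0$) vector-valued $L^p$ inequality for the Hermite wave multipliers $M(tH)$ and $\cos\sqrt{tH}$, together with their twisted versions produced by $E$, valid whenever $|\tfrac1p-\tfrac12|<\tfrac1{n+2}$. I would establish this by adapting the parametrix/finite-propagation analysis of Mueller--Stein \cite{MS} and the second author \cite{NT}, but carrying the kernel estimates through in the $\ell^2$ square-function norm rather than the scalar norm. The range $\tfrac{1}{n+2}$ arises because $G$ is homogeneous of degree $n+2$ under the nonisotropic dilation $D_s$, and this exponent governs the leading oscillatory term of the relevant kernel after stationary phase.

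The main obstacle, I expect, will be the R-boundedness of $\CF_2$. The $\lambda$-derivative produces the unbounded generator $E$, so one must show that twisted multipliers of the form (Hermite spectral multiplier)$\cdot$(creation or annihilation operator) still satisfy the square-function inequality uniformly in the scale $t=|\lambda|$. Packaging the oscillatory kernel estimates that underlie the scalar Hermite wave bound into vector-valued estimates that are stable under these index shifts, and uniform across all dilation parameters $\lambda$ arising from the conjugation by $\delta_\lambda$, is the technical heart of the problem.
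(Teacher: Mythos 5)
Your setup — Weis's operator-valued multiplier theorem with $X=Y=L^p(\R^n)$ and the reduction to R-boundedness of the symbol family and of $\lambda\frac{d}{d\lambda}$ of it — coincides with the paper's framework, but the core of your plan is asserted rather than proved, and it is precisely the step that nobody knows how to carry out. You propose to establish the R-boundedness (the $\ell^2$ square-function inequality over arbitrary scales $\lambda_j$) of the \emph{unsmoothed} wave multipliers $M(H(\lambda))=\frac{\sin\sqrt{H(\lambda)}}{\sqrt{H(\lambda)}}$ and $\cos\sqrt{H(\lambda)}$ directly, for $|\frac1p-\frac12|<\frac1{n+2}$, by "carrying the Mueller--Stein/Narayanan--Thangavelu kernel estimates through in the $\ell^2$-valued norm." The paper never proves R-boundedness of the wave family itself. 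It embeds the problem in the Bessel family $m_\alpha(u)=J_\alpha(\sqrt u)/\sqrt{u}^{\,\alpha}$, proves maximal estimates $T_\alpha^*f\le C(\alpha)M_2f$ only for $\Re\alpha>\frac{n-1}{2}$ (and $\Re\alpha>\frac{n+1}{2}$ for the derivative terms, via Theorem 2.3 and the decomposition of $\lambda\frac{d}{d\lambda}m_\alpha(H(\lambda))$ from \cite{JST}), upgrades these to the $\ell^2$-valued inequality through a Fefferman--Stein weighted estimate plus an auxiliary analytic interpolation (Proposition 3.1, Theorem 3.2), and thereby gets $L^p(\R^{n+1})$-boundedness of $m_\alpha(G)$ for all $1<p<\infty$ only when $\Re\alpha>\frac{n+1}{2}$ (Theorem 1.5). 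The wave operator ($\alpha=\frac12$) is recovered afterwards by Stein's analytic interpolation in $\alpha$ acting on $L^p(\R^{n+1})$, between $\Re\alpha=-\frac12$ (the $L^2$ line) and $\Re\alpha>\frac{n+1}{2}$; the exponent $n+2$ is exactly the slope of that interpolation line, $\Re\alpha>(n+2)|\frac1p-\frac12|-\frac12$, and not, as you suggest, a consequence of the homogeneity of $G$ or of a stationary-phase analysis of the kernel (the homogeneity is used only to reduce to $s=1$; a genuine kernel argument would be expected to yield the conjecturally optimal exponent $n$, which remains open).

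Concretely, the gap is twofold. First, the available kernel inputs (the bound $\Phi_k(x,x)\le C(2k+n)^{n/2-1}$ together with Bessel decay) control the maximal function only under the strict inequality $\Re\alpha>\frac{n-1}{2}$, which already excludes $\alpha=\frac12$ for every $n\ge2$; and even where one has $T^*f\le C\,M_pf$ with $p\ge2$, this yields only $\ell^r$-valued inequalities with $r>p$, which is exactly why the paper needs the Proposition 3.1 detour to reach $r=2$. Your plan addresses neither obstruction, so the "heart of the proof" you defer is not an adaptation of known estimates but a genuinely stronger, unproved vector-valued statement about the Hermite wave propagators, uniform over all scales. Second, your commutator decomposition of $\lambda\frac{d}{d\lambda}M(H(\lambda))$ via the dilation generator $E$ is a reasonable substitute for Lemma 3.4 of \cite{JST}, but it leaves index-shifting creation/annihilation factors whose uniform-in-$\lambda$ vector-valued bounds are nontrivial; the paper controls the analogous terms by showing $A_j^2(\lambda)H(\lambda)^{-1}$ is a Calder\'on--Zygmund operator with constants uniform in $\lambda$ and invoking C\'ordoba--Fefferman \cite{CF}, and no replacement for that input appears in your outline. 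As it stands, the proposal reproduces the correct reduction but does not supply a proof of the theorem.
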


Note that in the above theorem the homogeneous dimension $ (n+2) $ occurs. We
believe that the optimal result is the one in which $ (n+2) $ can be replaced
by $n.$ The Fourier multiplier corresponding to
$ \frac{\sin s\sqrt{G}}{\sqrt{G}} $ is given by
$ \frac{\sin s\sqrt{H(\lambda)}}{\sqrt{H(\lambda)}} $ which is precisely the
solution operator for the wave equation associated to the Hermite operator.
For a fixed $ \lambda $ the boundedness of this operator on $ L^p(\R^n) $ is
known for the range $|\frac{1}{p}-\frac{1}{2}|<\frac{1}{n},$ see \cite{NT},
\cite{JZ}. What we need to prove is the R-boundedness of the above family as
well as the same for $ \lambda $ times its derivative. The major part of this
paper is concerned with this problem.

For proving Theorem 1.3 we consider a more general class of
oscillatory multipliers of $G$, viz.
$\frac{J_{\alpha}(\sqrt{G})}{\sqrt{G}^{\alpha}}, $ for $ \Re{\alpha}\geq -1/2
$ where $ J_{\alpha}$ is the Bessel function of order $\alpha.$
This is a densely defined analytic family of operators acting on
$L^p(\R^{n+1}).$ When $\alpha=1/2$ we get back the solution operator of
the wave  equation and hence Theorem 1.3 follows once we  prove

\begin{thm} Let $ n \geq 2 $ and $ 1 < p < \infty.$  Then
$\frac{J_{\alpha}(\sqrt{G})}{\sqrt{G}^{\alpha}}$ is bounded on
$L^p(\R^{n+1})$ whenever $ \Re{(\alpha)}>
(n+2)|\frac{1}{p}-\frac{1}{2}|-\frac{1}{2}.$
\end{thm}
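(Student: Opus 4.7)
The plan is to apply Theorem 1.2 to the analytic family $T^\alpha = m_\alpha(G)$ with $m_\alpha(\sigma) = J_\alpha(\sqrt\sigma)/\sqrt\sigma^\alpha$. Via the Fourier decomposition in $t$ set up in the introduction, this family corresponds to the $B(X)$-valued Fourier multiplier $\lambda \mapsto m_\alpha(H(\lambda))$ on $X = L^p(\R^n)$-valued functions. It therefore suffices to establish R-boundedness of both $\{m_\alpha(H(\lambda))\}_{\lambda\neq 0}$ and $\{\lambda\partial_\lambda m_\alpha(H(\lambda))\}_{\lambda\neq 0}$; since $X$ is an $L^p$-space, R-boundedness is equivalent to the vector-valued $\ell^2$-inequality recalled in the introduction.

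As a first reduction, the dilation $T_\lambda f(x) = |\lambda|^{n/4} f(|\lambda|^{1/2} x)$ intertwines $H(\lambda)$ with $|\lambda|H$, where $H = H(1)$, and conjugation by $T_\lambda$ is an isometry of $L^p$-operator norms. Hence the R-boundedness of $\{m_\alpha(H(\lambda))\}$ reduces to that of the single-parameter family of Hermite Bessel multipliers $\{m_\alpha(tH)\}_{t > 0}$ on $L^p(\R^n)$. A direct computation using the identity $\frac{d}{dz}(z^{-\alpha}J_\alpha(z)) = -z^{-\alpha}J_{\alpha+1}(z)$ shows that $\lambda\partial_\lambda m_\alpha(H(\lambda))$ splits into a main term $-\tfrac12 T_\lambda\,(|\lambda|H)\,m_{\alpha+1}(|\lambda|H)\,T_\lambda^{-1}$ and a commutator with the Euler operator $E = n/2 + x\cdot\nabla$ produced by $\lambda\partial_\lambda T_\lambda = \tfrac12\,E\,T_\lambda$; both pieces are controlled by R-bounds for shifted families of Hermite Bessel multipliers.

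The core step is then to R-bound $\{m_\alpha(tH)\}_{t > 0}$ on $L^p(\R^n)$ for $\alpha$ in the stated half-plane. I would obtain this via Stein's analytic interpolation applied to the family $\alpha \mapsto \{m_\alpha(tH)\}_{t > 0}$. On the line $\Re\alpha = -1/2$ one has $m_{-1/2}(\sigma) = \sqrt{2/\pi}\cos\sqrt\sigma$, so the spectral theorem together with Khintchine's inequality yields R-boundedness on $L^2$, with only a polynomial, $\Gamma$-factor-controlled dependence on $\Im\alpha$. On a line $\Re\alpha = \alpha_1$ sufficiently large, the goal is a uniform-in-$t$ pointwise kernel estimate dominating $m_\alpha(tH)$ by a Hardy-Littlewood-type maximal operator; the Fefferman-Stein vector-valued maximal inequality then produces R-boundedness on every $L^p$. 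Interpolating the two lines yields the claimed range $\Re\alpha > (n+2)|\frac{1}{p} - \frac{1}{2}| - \frac{1}{2}$.

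The main obstacle is the kernel estimate at ``large $\alpha$'', uniform in the scale $t > 0$. The natural tool is Poisson's integral representation
$$\frac{J_\alpha(\sqrt{tH})}{(\sqrt{tH})^\alpha} = \frac{1}{2^\alpha\,\Gamma(\alpha+\tfrac12)\sqrt\pi}\int_{-1}^{1}(1-s^2)^{\alpha-1/2}\cos(s\sqrt{tH})\,ds \qquad (\Re\alpha > -\tfrac12),$$
which transfers the analysis to the Hermite cosine wave propagator, accessible through finite-propagation-speed considerations, dispersive bounds, and Mehler-type asymptotics of the Hermite functions. The replacement of the usual Hermite threshold $n$ by $n+2$ reflects exactly two additional costs in this argument: one unit to upgrade a single-operator bound to an R-bound uniform in $t$, and a second unit to control the derivative family $\{\lambda\partial_\lambda m_\alpha(H(\lambda))\}$ required by Weis's theorem.
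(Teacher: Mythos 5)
Your overall scaffolding (Weis's theorem plus R-boundedness of $m_\alpha(H(\lambda))$ and of $\lambda\frac{d}{d\lambda}m_\alpha(H(\lambda))$, with the dilation reduction to $m_\alpha(tH)$) matches the paper, but the place where you run the analytic interpolation is wrong, and this is a genuine gap, not a presentational one. You propose to interpolate the R-bounds of the Hermite families in $\alpha$ (between $\Re\alpha=-\frac12$ on $L^2$ and a large line $\Re\alpha=\alpha_1$ on all $L^p$) and then feed the resulting $p$-dependent range $\Re\alpha>(n+2)|\frac1p-\frac12|-\frac12$ into Weis's theorem at each $p$. But Weis's theorem requires R-boundedness of \emph{both} families at the same $\alpha$, and the derivative family cannot be pushed anywhere near $\Re\alpha=-\frac12$: it contains the term $H(\lambda)m_{\alpha+1}(H(\lambda))$, whose symbol at the eigenvalue $u=(2k+n)|\lambda|$ is $u\,J_{\alpha+1}(\sqrt u)/(\sqrt u)^{\alpha+1}\sim u^{(1-2\Re\alpha)/4}$ for large $u$; this is unbounded on the spectrum whenever $\Re\alpha<\frac12$, so the family is not even uniformly bounded on $L^2(\R^n)$ there, let alone R-bounded. (No cancellation from the other pieces of $\lambda\frac{d}{d\lambda}m_\alpha(H(\lambda))$ can help, since those pieces are off-diagonal in the Hermite decomposition while this one is diagonal.) Consequently the derivative family imposes a $p$-independent floor on $\Re\alpha$, and your closing heuristic (``one unit for R-bounds, one unit for the derivative family'') does not translate into a valid derivation of the coupled threshold: for $p$ close to $2$ your scheme asks Weis's theorem to operate under hypotheses that are false.

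The paper avoids exactly this by interpolating \emph{after} Weis, at the level of the operator on $\R^{n+1}$: Weis's theorem is invoked only for $\Re\alpha>\frac{n+1}{2}$ (Theorem 1.5), where the main family is R-bounded for $\Re\alpha>\frac{n-1}{2}$ and the derivative family for $\Re\alpha>\frac{n+1}{2}$, and then Stein's analytic interpolation is applied to the admissible analytic family $\alpha\mapsto J_\alpha(\sqrt G)/\sqrt G^{\alpha}$ acting on $L^p(\R^{n+1})$, with the other endpoint $\Re\alpha=-\frac12$ supplied by the spectral theorem on $L^2(\R^{n+1})$ alone (no multiplier theorem, hence no derivative condition, is needed there). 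The length of the interval from $-\frac12$ to $\frac{n+1}{2}$ is $\frac{n+2}{2}$, which is precisely where the $(n+2)$ in the statement comes from. Two secondary points: your Euler-operator commutator for $\lambda\partial_\lambda$ is not controlled by ``shifted Bessel multipliers'' alone --- carried out, it produces operators of the form $A_j^2(\lambda)H(\lambda)^{-1}$ times $\int_0^1 H(\lambda)m_{\alpha+1}(H(\lambda)+2s\lambda)\,ds$, so you also need uniform-in-$\lambda$ vector-valued bounds for these Riesz-transform-type Calder\'on--Zygmund operators (the paper imports this from \cite{JST} and \cite{CF}); and obtaining the $\ell^2$-valued inequality for $m_\alpha(tH)$ in the intermediate range is itself nontrivial --- the paper does it via a weighted $L^2$ inequality (Proposition 3.1) proved by an auxiliary analytic interpolation and duality, since the maximal estimates alone only give $\ell^r$-valued bounds with $r>p$.
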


Recall that the Bessel functions $ J_\alpha(t) $ are defined even for complex
values of $\alpha.$ In fact the Poisson integral representation
$$ J_\alpha(t) = \frac{(t/2)^\alpha}{\Gamma((2\alpha+1)/2)\Gamma(1/2)}
\int_{-1}^1 e^{its}(1-s^2)^{(2\alpha-1)/2} ds $$
is valid as long as $ \Re{(\alpha)} > -1/2.$ Moreover, when
$ \alpha = \beta+\delta+i\gamma $ where
$ \beta >-1/2, \delta > 0, \gamma \in \R $ we have the identity
$$ \frac{J_{\alpha}(t)}{t^\alpha} = \frac{2^{1-\delta-i\gamma}}{\Gamma(\delta+i\gamma)}
\int_0^1 \frac{J_{\beta}(st)}{(st)^\beta}
(1-s^2)^{\delta+i\gamma-1}s^{2\beta+1} ds.$$
Thus we see that  $\frac{J_{\alpha}(\sqrt{G})}{\sqrt{G}^{\alpha}}$ ia an
analytic family of
operators which is bounded on $ L^2(\R^{n+1}) $ whenever $ \Re{(\alpha)}
= -\frac{1}{2} .$ Using the above formula we can also check that the family
is admissible. Hence we can appeal to Stein's analytic interpolation theorem
to obtain Theorem 1.4 as soon as we get

\begin{thm} Let $ n \geq 2.$ Then
$\frac{J_{\alpha}(\sqrt{G})}{\sqrt{G}^{\alpha}}$ is bounded on
$L^p(\R^{n+1})$ for all $ 1< p < \infty $ provided $ \Re{(\alpha)}>
\frac{(n+1)}{2}.$
\end{thm}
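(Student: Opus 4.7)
The plan is to apply Weis's operator-valued multiplier theorem (Theorem~1.2) with $X = Y = L^p(\R^n)$, which is UMD for $1 < p < \infty$. Setting $m_\alpha(t) := J_\alpha(\sqrt{t})/\sqrt{t}^\alpha$, the operator-valued Fourier description of $G$ given in the introduction reduces Theorem~1.6 to proving the R-boundedness on $L^p(\R^n)$ of the two families
$$\{m_\alpha(H(\lambda)) : \lambda \in \R^*\} \quad \text{and} \quad \bigl\{\lambda\, \tfrac{d}{d\lambda}\, m_\alpha(H(\lambda)) : \lambda \in \R^*\bigr\},$$
i.e., the vector-valued $\ell^2$-inequalities stated after Theorem~1.2.

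First I would record two ingredients. One is the dilation identity $H(\lambda) = |\lambda|\, D_\lambda H D_\lambda^{-1}$, where $D_\lambda f(x) := |\lambda|^{n/4} f(|\lambda|^{1/2}x)$ and $H = H(1)$, which intertwines $m_\alpha(H(\lambda)) = D_\lambda\, m_\alpha(|\lambda|H)\, D_\lambda^{-1}$; since the Euler field $\Lambda := \sum_{i=1}^n x_i \partial_{x_i} + n/2$ commutes with $D_\lambda$, a direct computation gives $\lambda\, \partial_\lambda P_k(\lambda) = \tfrac{1}{2}[\Lambda, P_k(\lambda)]$. The other is the Bessel identity $\tfrac{d}{du}(J_\alpha(u)/u^\alpha) = -J_{\alpha+1}(u)/u^\alpha$, which yields $m_\alpha'(t) = -\tfrac{1}{2} m_{\alpha+1}(t)$. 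Combining these with the spectral expansion $m_\alpha(H(\lambda)) = \sum_k m_\alpha((2k+n)|\lambda|)\, P_k(\lambda)$ produces the decomposition
$$\lambda\, \tfrac{d}{d\lambda}\, m_\alpha(H(\lambda)) \;=\; -\tfrac{1}{2}\, H(\lambda)\, m_{\alpha+1}(H(\lambda)) \;+\; \tfrac{1}{2}\, \bigl[\Lambda,\, m_\alpha(H(\lambda))\bigr].$$

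Next I would prove uniform kernel estimates. Using the Poisson representation
$$m_\alpha(t) \;=\; c_\alpha \int_{-1}^1 \cos(s\sqrt{t})\,(1-s^2)^{\alpha-1/2}\,ds, \qquad \Re(\alpha) > -1/2,$$
I would express $m_\alpha(H(\lambda))$ as a weighted integral of the wave propagators $\cos(s\sqrt{H(\lambda)})$. Under the hypothesis $\Re(\alpha) > (n+1)/2$ the weight $(1-s^2)^{\alpha-1/2}$ has more than $n/2$ vanishing derivatives at $s = \pm 1$, and, combined with Mehler's formula for $e^{-tH(\lambda)}$ and finite propagation speed for the Hermite wave equation, this should yield pointwise kernel bounds $|K_\lambda(x,y)| \leq C\, q_\lambda(x,y)$ with $q_\lambda$ a Gaussian-type kernel whose associated integral operator is dominated, uniformly in $\lambda$, by a Hardy--Littlewood-type maximal operator on $\R^n$. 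The term $H(\lambda)\, m_{\alpha+1}(H(\lambda))$ falls into the same scheme after writing $H(\lambda)\cos(s\sqrt{H(\lambda)}) = -\partial_s^2 \cos(s\sqrt{H(\lambda)})$ and integrating by parts twice in $s$; this costs two derivatives of $(1-s^2)^{\alpha+1/2}$, which are available because $\Re(\alpha+1) > (n+3)/2$. R-boundedness of $\{m_\alpha(H(\lambda))\}_\lambda$ and of the first piece of the derivative family then follows from the Fefferman--Stein vector-valued maximal inequality.

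The main obstacle is the commutator term $[\Lambda, m_\alpha(H(\lambda))]$: it is not a spectral multiplier of $H(\lambda)$, so the direct spectral machinery does not apply. I would handle it through the wave representation again, by working with $[\Lambda, \cos(s\sqrt{H(\lambda)})]$, which satisfies an inhomogeneous wave equation driven by $[\Lambda, H(\lambda)] = 2\Delta + 2\lambda^2|x|^2$; Duhamel's principle then expresses the commutator as a superposition of wave propagators of $H(\lambda)$ composed with the second-order operator $[\Lambda, H(\lambda)]$, and, after absorbing the spectral powers through the integration-by-parts trick above, one bounds the resulting kernel by the same Gaussian type $q_\lambda$. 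Once this kernel control is in place, the Fefferman--Stein inequality again yields R-boundedness of the commutator piece, completing the verification of the two hypotheses of Weis's theorem and thereby proving Theorem~1.6.
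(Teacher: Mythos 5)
Your overall reduction---Weis's theorem with $X=Y=L^p(\R^n)$, hence the R-boundedness of the two families $\{m_\alpha(H(\lambda))\}$ and $\{\lambda\frac{d}{d\lambda}m_\alpha(H(\lambda))\}$, with the derivative split into $-\frac12 H(\lambda)m_{\alpha+1}(H(\lambda))$ plus a remainder---is the same as the paper's. The gap is the step you yourself flag with ``should yield pointwise kernel bounds.'' You claim that for $\Re(\alpha)>\frac{n+1}{2}$ the kernel of $m_\alpha(H(\lambda))$ (and of $H(\lambda)m_{\alpha+1}(H(\lambda))$) is dominated, uniformly in $\lambda\in\R^*$, by a Gaussian-type kernel, so that $\sup_\lambda|m_\alpha(H(\lambda))f|\lesssim Mf$ and Fefferman--Stein finishes the proof. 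No argument is given for this, and it is exactly the hard point: after the dilation reduction the claim amounts to a uniform-in-$t$ bound $|K_t^\alpha(x,y)|\le C t^{-n}(1+|x-y|/t)^{-n-\epsilon}$, and the bounds obtainable by summing Hermite projection kernels with $\sup_x\Phi_k(x,x)\le C(2k+n)^{n/2-1}$ require $\Re(\alpha)>n-\frac12$ (this is Proposition 2.2(ii)/Theorem 2.1(ii) of the paper), which is strictly larger than $\frac{n+1}{2}$ once $n\ge 3$. Your heuristic ``more than $n/2$ vanishing derivatives of the weight'' is the flat-Laplacian count; here the relevant regime includes arbitrarily large $t$ (large $|\lambda|$), where the harmonic-oscillator flow is periodic and refocuses at focal points, so finite propagation speed plus Mehler's formula do not by themselves give uniform pointwise bounds at the exponent $\frac{n+1}{2}$. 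The paper works around precisely this: at the threshold $\Re(\alpha)>\frac{n-1}{2}$ it only proves the $L^2$-averaged kernel estimate (Proposition 2.2(i)), giving $T_\alpha^*f\lesssim M_2f$; it then observes explicitly that such a bound yields the vector-valued inequality only for $\ell^r$ with $r>p\ge2$, and it reaches the needed $\ell^2$ case by proving the weighted inequality $\int|T_\alpha(\lambda)f|^2|\varphi|\,dx\le C\int|f|^2M\varphi\,dx$ through Stein's analytic interpolation between the $M_2$-regime ($\Re(\alpha)>\frac{n-1}{2}$, exponent $p_0>2$) and the $M$-regime ($\Re(\alpha)>n-\frac12$, exponent $p_1>1$), followed by a duality argument (Proposition 3.1 and Theorem 3.2). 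Your proposal contains no substitute for this interpolation step, and without it the pointwise-maximal route does not reach $\Re(\alpha)>\frac{n+1}{2}$ for $n\ge3$.

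The remainder term is also only sketched. Your identity $\lambda\frac{d}{d\lambda}m_\alpha(H(\lambda))=-\frac12H(\lambda)m_{\alpha+1}(H(\lambda))+\frac12[\Lambda,m_\alpha(H(\lambda))]$ is formally correct and is a legitimate alternative to the decomposition from \cite{JST} used in the paper, but the commutator is where the real work lies: $[\Lambda,H(\lambda)]=2\Delta+2\lambda^2|x|^2$ has an unbounded coefficient, so ``bounding the resulting kernel by the same Gaussian type'' after Duhamel is not automatic---one has to exploit the creation/annihilation structure (spectral shifts of size $O(\lambda)$, which is where the terms $\int_0^1 m_{\alpha+1}(H(\lambda)+2s\lambda)\,ds$ come from) and, in the paper's treatment, the nontrivial input that $A_j^2(\lambda)H(\lambda)^{-1}$ is a Calder\'on--Zygmund operator uniformly in $\lambda$, so that the Cordoba--Fefferman theorem supplies its vector-valued boundedness. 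As written, both halves of your argument rest on kernel estimates that are asserted rather than proved and that, at the threshold $\Re(\alpha)>\frac{n+1}{2}$, are stronger than anything established in the paper; the proof is therefore incomplete.
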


Thus by setting $ m_\alpha(u) = \frac{J_\alpha(\sqrt{u})}{\sqrt{u}^\alpha} $
we study the R-boundedness of the family $ m_\alpha(H(\lambda))$ when
$ \Re{(\alpha)}> \frac{(n+1)}{2}.$ We also need to study the R-boundedness of
$ \lambda \frac{d}{d\lambda}m_\alpha(H(\lambda)).$ We address these problems
in the next two sections.

We conclude this introduction with the following remarks. In all the theorems
stated above we have assumed $ n \geq 2.$ The reason is the following: in
the proof of Proposition 2.2 which is used in proving Theorem 2.1 we need to
use the estimate $ \Phi_k(x,x) \leq C (2k+n)^{n/2-1}, x \in \R^n $
which is valid only when $ n \geq 2.$ Here $ \Phi_k(x,y) $ is the kernel
of the projection $ P_k $ associated to the Hermite operator $ H.$ In the
one dimensional case we have $ \Phi_k(x,x) = (h_k(x))^2 ,$ where $ h_k $ is
the $k-$th Hermite function on $ \R $ behaves like $ k^{-1/6} $ and hence we
do not get an analogue of Proposition 2.2. However, when $ B $ is a compact
subset of $ \R $ we do have $ \sup_{x \in B}(h_k(x))^2 \leq C (2k+1)^{-1/2}$
and hence it is possible to prove a version of Theorem 1.3 for the operator
$ \chi_B \frac{\sin{\sqrt{G}}}{\sqrt{G}}\chi_B .$  We do not pursue this here
as the result of Meyer is stronger than what we can prove.

\section{ A maximal theorem for $ m_\alpha(H(\lambda))$}
\setcounter{equation}{0}

As we mentioned at the end of the introduction we are interested in proving
vector valued inequalities for the families $ T_\alpha(\lambda) =
 m_\alpha(H(\lambda)) $ and $  \lambda \frac{d}{d\lambda}T_\alpha(\lambda).$
In order to do that we need a maximal theorem for the family
$ T_\alpha(\lambda) $ which means that we have to get estimates for the
maximal function $ T_\alpha^* f(x) =  \sup_{\lambda\in\R^*}
|T_\alpha(\lambda)f(x)|.$ For $ 1 \leq p < \infty $ let $ M_pf(x) =
(M|f|^p(x))^{1/p} $ where $ Mf $ is the Hardy-Littlewood maximal function.
Let $\alpha=x+iy.$ We call $c(\alpha)$ an admissible function (or function
of admissible growth) if $$\sup_{y\in\R}e^{-b|y|}\log(|c(\alpha)|)<\infty$$
for some $b<\pi.$ With this terminolgy  we have the following:

\begin{thm} Let $ n \geq 2.$ (i) For $ \Re(\alpha) > \frac{(n-1)}{2},$
we have 
$$ T_\alpha^* f(x)
\leq C_2(\alpha)M_2f(x);$$ and (ii) for $ \Re(\alpha) > n-\frac{1}{2},  
T_\alpha^* f(x) \leq C_1(\alpha) Mf(x) $ where the functions $ C_1 $ and 
$ C_2 $ are of admissible growth.
\end{thm}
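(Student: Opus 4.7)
The plan is to reduce both estimates to a scale-covariant pointwise bound on the integral kernel of $T_\alpha(\lambda)$ and then dominate it by a dilated approximation to the identity. The dilation $V_\lambda f(x)=f(\sqrt{|\lambda|}\,x)$ conjugates $H(\lambda)$ with $|\lambda|H(1)$, so by functional calculus the Schwartz kernel of $T_\alpha(\lambda)=m_\alpha(H(\lambda))$, with $m_\alpha(u)=J_\alpha(\sqrt u)/(\sqrt u)^\alpha$, takes the form
\[
K_\alpha^\lambda(x,y)=|\lambda|^{n/2}\sum_{k\ge 0}m_\alpha\bigl((2k+n)|\lambda|\bigr)\,\Phi_k\bigl(\sqrt{|\lambda|}\,x,\sqrt{|\lambda|}\,y\bigr).
\]
My target is a self-similar majorant $|K_\alpha^\lambda(x,y)|\le C(\alpha)|\lambda|^{n/2}\Psi(\sqrt{|\lambda|}(x-y))$ with $\Psi\in L^1(\R^n)$ for (ii) and $\Psi\in L^2(\R^n)$ for (i). Such a bound immediately yields the maximal domination, since $|\lambda|^{n/2}\Psi(\sqrt{|\lambda|}\cdot)$ behaves as a dilated approximation of identity: its convolution with $|f|$ is pointwise controlled by $\|\Psi\|_{L^1}Mf(x)$ or $\|\Psi\|_{L^2}M_2f(x)$ after a dyadic decomposition in $|x-y|$.

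First I would bound the symbol via the Poisson integral representation of $J_\alpha$ and the $J_\beta$-to-$J_\alpha$ identity recalled in the introduction to obtain $|m_\alpha(u)|\le C(\alpha)\min\bigl(1,u^{-\Re\alpha/2-1/4}\bigr)$, with $C(\alpha)$ of admissible growth in $\Im\alpha$. Combining this with the diagonal Hermite estimate $|\Phi_k(x,y)|\le \Phi_k(x,x)^{1/2}\Phi_k(y,y)^{1/2}\le C(2k+n)^{n/2-1}$, available precisely because $n\ge 2$, the spectral series converges absolutely when $\Re\alpha>n-1/2$ to give the uniform bound $|K_\alpha^\lambda(x,y)|\le C(\alpha)|\lambda|^{n/2}$; this is already what is needed near the diagonal for (ii). For the weaker threshold in (i), I would instead invoke the Plancherel-type identity $\int|K_\alpha^\lambda(x,y)|^2\,dy=\sum_k|m_\alpha((2k+n)|\lambda|)|^2\Phi_k^\lambda(x,x)$, which is finite as soon as $\Re\alpha>(n-1)/2$ and gives the $L^2_y$ kernel bound needed to dominate by $M_2f$ near the diagonal.

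The remaining step is to produce decay in $|x-y|$ at the natural scale $1/\sqrt{|\lambda|}$. Here I would exploit two mechanisms: the superpolynomial decay of $\Phi_k(u,v)$ outside the classical region $|u|,|v|\le C\sqrt{2k+n}$ of the harmonic oscillator, and the oscillation of $J_\alpha(\sqrt{(2k+n)|\lambda|})$ as $k$ varies. Together these localize $K_\alpha^\lambda(x,y)$ in a neighbourhood of the diagonal of size $1/\sqrt{|\lambda|}$ with rapidly decaying tails, yielding a $\Psi$ with the required $L^1$ or $L^2$ norm; the admissible growth of $C(\alpha)$ in $\Im\alpha$ is tracked throughout so that the estimates feed into Stein interpolation as needed later. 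A dyadic splitting in $|x-y|$ then converts the majorant into the pointwise maximal bounds in (i) and (ii).

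The main obstacle will be this off-diagonal step: the uniform and $L^2$ bounds of Step~2 are not integrable in $y$, so the required decay in $|x-y|$ must come entirely from the interplay between Bessel oscillation and Hermite wave-packet localization. In the regime $(2k+n)|\lambda|\lesssim 1$ the Bessel factor is essentially constant and no cancellation is available, so one must rely on the support constraint on $\Phi_k(\sqrt{|\lambda|}x,\sqrt{|\lambda|}y)$; in the regime $(2k+n)|\lambda|\gg 1$ one exploits the oscillation in $J_\alpha$ to produce decay, likely via summation by parts in $k$. Balancing these two contributions to extract a uniform, scale-invariant majorant is the technical core of the argument, and is expected to rely on Proposition~2.2 mentioned in the introduction.
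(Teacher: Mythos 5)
Your reduction to the rescaled kernel and your near-diagonal estimates (the Plancherel identity $\int|K_t^{\alpha}(x,y)|^2dy=\sum_k|m_\alpha(t^2(2k+n))|^2\Phi_k(x,x)$ for part (i), the absolute summation with $\Phi_k(x,x)\leq C(2k+n)^{n/2-1}$ for part (ii), and the standard Bessel bounds with admissible constants) coincide with what the paper does. But there are two genuine gaps. First, your global strategy for part (i) — a pointwise self-similar majorant $|K_\alpha^\lambda(x,y)|\leq C(\alpha)|\lambda|^{n/2}\Psi(\sqrt{|\lambda|}(x-y))$ with $\Psi\in L^2$ — is not what your own Step 2 delivers and is not available at the threshold $\Re\alpha>\frac{n-1}{2}$: an absolute pointwise bound on the kernel (even on the diagonal) forces the condition $\Re\alpha>n-\frac{1}{2}$, since the relevant sum $\sum_k (t\sqrt{2k+n})^{-\Re\alpha-1/2}(2k+n)^{n/2-1}$, restricted to $t\sqrt{2k+n}\sim 2^j$, contributes $\sim 2^{j(n-\Re\alpha-1/2)}t^{-n}$. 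In the range $\Re\alpha>\frac{n-1}{2}$ one only controls $L^2_y$ norms of the kernel over annuli $\{|x-y|>r\}$, and the maximal bound by $M_2f$ must be extracted from those annular $L^2$ estimates via the decomposition $f=\sum_k f_k$ on dyadic annuli and Cauchy--Schwarz on each piece (checking that the resulting series $\sum_k(2^kt^{-1})^{n/2}(1+2^kt^{-1})^{-\Re\alpha-1/2}$ is bounded uniformly in $t$); your pointwise-envelope reduction skips this and cannot be repaired as stated.

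Second, the off-diagonal decay, which you yourself identify as the technical core, is not proved: you name two mechanisms (superpolynomial decay of $\Phi_k$ outside the classical region, and oscillation of $J_\alpha$ handled by summation by parts in $k$) but do not carry either out, and you end by saying the step ``is expected to rely on Proposition 2.2'' — which is precisely the statement that has to be established. The paper's actual device is different and more algebraic: one bounds weighted quantities $\int|(x-y)^\beta K_t^\alpha(x,y)|^2dy$ (respectively $\sup_{x,y}|(x-y)^\beta K_t^\alpha(x,y)|$) for $|\beta|=m>\Re\alpha+\frac12$, using the identity (Lemma 3.2.3 of Thangavelu's book) that writes $(x-y)^\beta M_\psi(x,y)$ as a combination of $(B-A)^\gamma\Delta^{k}M_\psi$ with finite differences of the spectral multiplier, together with the recursion $\frac{d}{d\lambda}\frac{J_\alpha(\sqrt\lambda)}{(\sqrt\lambda)^\alpha}=-\frac12\frac{J_{\alpha+1}(\sqrt\lambda)}{(\sqrt\lambda)^{\alpha+1}}$, so that each difference gains a factor $t^2$ while raising the Bessel order, and the raising action of $A_j,B_j$ on Hermite functions plus the same diagonal bound closes the estimate, yielding the decay $(1+rt^{-1})^{-2\Re\alpha-1}$ (resp. $(1+rt^{-1})^{-\Re\alpha-1/2}$). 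Without this (or an equivalently quantitative substitute for your sketched oscillation/localization argument), your proposal does not constitute a proof of either part of the theorem.
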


This theorem will be proved by obtaining good estimates on the kernel of
$ T_\alpha(\lambda).$ We briefly recall some details from the spectral theory
of the Hermite operator $ H(\lambda).$ Let $ \Phi_\alpha, \alpha \in \N^n $
stand for the normalised Hermite functions on $ \R^n $ which are
eigenfunctions of $ H = H(1) $ with eigenvalues $ (2|\alpha|+n) $ and form an
orthonormal basis for $ L^2(\R^n).$ It follows that for $ \lambda \in \R^* $
the functions $ \Phi_\alpha^\lambda(x) = |\lambda|^{n/4}
\Phi(|\lambda|^{1/2}x) $ satisfy $ H(\lambda)\Phi_\alpha^\lambda
= (2|\alpha|+n)|\lambda| \Phi_\alpha^\lambda.$
The spectral projections $ P_k(\lambda) $ of $ H(\lambda) $ are defined by
$$ P_k(\lambda)f =  \sum_{|\alpha| =k}
(f,\Phi_\alpha^\lambda)\Phi_\alpha^\lambda.$$ It follows that
$  P_k(\lambda) = \delta_\lambda  P_k \delta_\lambda^{-1} $ where $
\delta_\lambda f(x) = f(|\lambda|^{\frac{1}{2}}x) $ and $ P_k =P_k(1).$
Therefore, $ m(H(\lambda)) = \delta_\lambda m(\lambda H) \delta_\lambda^{-1}$
for any multiplier $m.$

The above remarks imply that
$$ \frac{J_{\alpha}(\sqrt{H(\lambda)})}{\sqrt{H(\lambda)}^{\alpha}}f(x) =
\delta_{\lambda}\frac{J_{\alpha}(\sqrt{|\lambda|H})}{(\sqrt{|\lambda|H})^
{\alpha}}{\delta_{\lambda}}^{-1}f(x).$$ In view of this relation, a moment's
thought reveals that it is enough to consider the maximal function
$$ \sup_{t>0}|\frac{J_{\alpha}(t\sqrt{H})}{(t\sqrt{H})^{\alpha}}f(x)| $$
and establish the estimates stated in the theorem above.

By the definition
$$ \frac{J_{\alpha}(t\sqrt{H})}{(t\sqrt{H})^{\alpha}}f =
\sum_{k=0}^\infty \frac{J_{\alpha}(t\sqrt{2k+n})}{({t\sqrt{2k+n})}^{\alpha}}
P_k f $$
and hence it follows that
$ \frac{J_{\alpha}(t\sqrt{H})}{(t\sqrt{H})^{\alpha}}$ is an integral operator
whose kernel $ K_t^\alpha(x,y) $ is given by
$$ K_t^{\alpha}(x,y) = \sum_{k=0}^\infty \frac{J_{\alpha}(t\sqrt{2k+n})}
{({t\sqrt{2k+n})}^{\alpha}}\Phi_k(x,y),$$
where $\Phi_k(x,y)=\sum_{|\beta|=k}\Phi_{\beta}(x)\Phi_{\beta}(y) $ is the
kernel of $ P_k.$ We require the following estimates on the kernel
$ K_t^\alpha.$

\begin{prop} Let $ n \geq 2.$ (i)  For $ \Re{(\alpha)}>\frac{n-1}{2}$ we have
$$ \int_{|x-y|>r}|K_t^{\alpha}(x,y)|^2dy\leq C_2(\alpha) t^{-n}
(1+rt^{-1})^{-2\Re{(\alpha)}-1} $$ and (ii) for
$ \Re{(\alpha)}>n-\frac{1}{2}$ we have
$$ \sup_{|x-y|>r}|K_t^{\alpha}(x,y)|\leq C_1(\alpha) t^{-n}
(1+rt^{-1})^{-\Re{(\alpha)}-1/2} $$
where $C_1$ and $ C_2$ are functions  of admissible growth.
\end{prop}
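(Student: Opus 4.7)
My plan is to combine the Poisson integral representation of the Bessel function with finite propagation speed for the Hermite wave equation. This reduces both estimates to the case $r=0$, which is then handled by direct summation against the Bessel asymptotics and the diagonal bound on the Hermite projection kernel stated at the end of the introduction.

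Starting from the Poisson formula recalled in the introduction, replacing the scalar $t$ by the operator $t\sqrt H$ and using that the $\sin$ part integrates to zero, I would write
\[
\frac{J_\alpha(t\sqrt H)}{(t\sqrt H)^\alpha} = \frac{2^{1-\alpha}}{\Gamma(\alpha+\tfrac12)\Gamma(\tfrac12)}\int_0^1 \cos(st\sqrt H)(1-s^2)^{\alpha-1/2}\,ds,
\]
so that $K_t^\alpha(x,y)=c_\alpha\int_0^1 W_{st}(x,y)(1-s^2)^{\alpha-1/2}\,ds$, where $W_\tau(x,y)$ denotes the Schwartz kernel of $\cos(\tau\sqrt H)$. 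Because $H=-\Delta+|x|^2$ has principal symbol $|\xi|^2$, the standard energy inequality for wave equations with nonnegative potential yields finite propagation speed equal to one, so $W_\tau(x,y)=0$ whenever $|x-y|>|\tau|$. Consequently $K_t^\alpha(x,y)$ vanishes whenever $|x-y|>t$, and both claimed estimates are trivially satisfied in the regime $r>t$.

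In the remaining range $r\le t$, the weight $(1+r/t)^{-\beta}$ is comparable to $1$, so it suffices to prove the unrestricted bounds
\[
\int_{\R^n}|K_t^\alpha(x,y)|^2\,dy\le C_2(\alpha)\,t^{-n},\qquad \sup_{x,y}|K_t^\alpha(x,y)|\le C_1(\alpha)\,t^{-n}.
\]
I would extract both from the spectral series $K_t^\alpha(x,y)=\sum_k m_\alpha(t\sqrt{2k+n})\Phi_k(x,y)$ with $m_\alpha(\rho)=J_\alpha(\rho)\rho^{-\alpha}$. For (i), Plancherel in the Hermite basis yields $\int|K_t^\alpha(x,y)|^2\,dy=\sum_k|m_\alpha(t\sqrt{2k+n})|^2\Phi_k(x,x)$; inserting the diagonal bound $\Phi_k(x,x)\le C(2k+n)^{n/2-1}$ (valid for $n\ge 2$) and the Bessel asymptotic $|m_\alpha(\rho)|\le C(\alpha)\min(1,\rho^{-\Re\alpha-1/2})$ and splitting the sum at $k\sim t^{-2}$ produces the total $C_2(\alpha)\,t^{-n}$, with convergence requiring exactly $\Re\alpha>(n-1)/2$. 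For (ii) I would use the Cauchy--Schwarz consequence $|\Phi_k(x,y)|\le \Phi_k(x,x)^{1/2}\Phi_k(y,y)^{1/2}\le C(2k+n)^{n/2-1}$ and sum pointwise, the analogous calculation now giving $C_1(\alpha)\,t^{-n}$ under the strengthened hypothesis $\Re\alpha>n-\tfrac12$. The admissible growth of $C_1(\alpha),C_2(\alpha)$ in $|\Im\alpha|$ is inherited from ratios of $\Gamma$-functions via Stirling, since $|1/\Gamma(\alpha+\tfrac12)|$ grows only like $e^{\pi|\Im\alpha|/2}$ with $\pi/2<\pi$.

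The main obstacle I foresee is the rigorous justification of finite propagation speed at the level of the distributional kernel $W_\tau$: the classical energy argument applies to solutions with smooth, compactly supported Cauchy data, and one must then pass to the limit as the data approximate a delta function to conclude the support statement for the kernel itself. Modulo this standard step, the remainder of the proof is a routine summation in the spectral parameter governed by Bessel asymptotics and the diagonal kernel estimate.
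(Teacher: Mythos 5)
Your proof is essentially correct, but in the crucial off-diagonal regime it takes a genuinely different route from the paper. For $rt^{-1}\leq 1$ your argument (Plancherel in the Hermite basis, the diagonal bound $\Phi_k(x,x)\leq C(2k+n)^{n/2-1}$, the two Bessel bounds, and a dyadic split at $t\sqrt{2k+n}\sim 1$) coincides with the paper's. For $rt^{-1}>1$, however, the paper does not use finite propagation speed at all: it dominates $\int_{|x-y|>r}|K_t^\alpha|^2\,dy$ by $r^{-2m}\int|(x-y)^\beta K_t^\alpha|^2\,dy$ with $m>\Re\alpha+\tfrac12$, and then controls the weighted kernel through the identity of Lemma 2.3 (Lemma 3.2.3 of Thangavelu's book), which converts multiplication by $(x-y)^\beta$ into finite differences $\Delta^k\psi$ of the multiplier hit by the creation/annihilation operators $A,B$; the differences are then evaluated via the recursion $\frac{d}{d\lambda}\frac{J_\alpha(\sqrt\lambda)}{\sqrt\lambda^{\alpha}}=-\tfrac12\frac{J_{\alpha+1}(\sqrt\lambda)}{\sqrt\lambda^{\alpha+1}}$ and summed exactly as in the diagonal case. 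Your alternative — subordinating $m_\alpha(t\sqrt H)$ to $\cos(st\sqrt H)$, $0\le s\le 1$, via the Poisson integral and invoking unit propagation speed for $-\Delta+|x|^2$ — is legitimate (the potential is nonnegative, so the local energy argument, or equivalently Davies--Gaffney estimates, gives $\operatorname{supp}W_\tau\subset\{|x-y|\le|\tau|\}$), and it yields the strictly stronger conclusion that $K_t^\alpha$ vanishes for $|x-y|>t$, making the polynomial decay trivial; it also explains transparently why only the $r\le t$ computation matters. What the paper's method buys in exchange is self-containedness within the Hermite special-function framework (no wave-equation input, which the paper never establishes and which needs a separate kernel-level justification, including the a.e. interpretation of the support statement in the range $\Re\alpha>\frac{n-1}{2}$ where the series only converges in $L^2(dy)$), and a template that the authors reuse verbatim for the modified kernel $\tilde K_t^\alpha$ and for multipliers not subordinated to the wave propagator. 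Do make sure, if you keep your route, to state the finite-propagation-speed lemma precisely and check that its constants do not disturb the admissible growth in $\Im\alpha$, which in your scheme comes only from $1/\Gamma(\alpha+\tfrac12)$ and the Bessel bounds.
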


Assuming the proposition for a moment, we complete the proof of Theorem 2.1.
For $ x \in \R^n $ we define
$ f_k(y)=\chi_{\{y:2^{k}<|x-y|\leq2^{k+1}\}}(y)f(y),$ $k\in\Z $ so that
$f = \sum_{k=-\infty}^\infty f_k $
and
$$ \frac{J_{\alpha}(t\sqrt{H})}{(t\sqrt{H})^{\alpha}}f(x)=
\sum_{k=-\infty}^\infty \int_{\R^n}K_t^{\alpha}(x,y)f_k(y)dy.$$
After applying Cauchy-Schwarz inequality to each term in the sum we see that
$ \frac{J_{\alpha}(t\sqrt{H})}{(t\sqrt{H})^{\alpha}}f(x)$ is bounded by
$$  \sum_{k=-\infty}^\infty 2^{(k+1)n/2}
\left( \int_{|x-y|>2^k}|K_t^{\alpha}(x,y)|^2 dy \right)^{\frac{1}{2}}
\left(\frac{1}{2^{(k+1)n}}\int_{|x-y|\leq 2^{k+1}}|f(y)|^2 dy
\right)^{\frac{1}{2}}.$$
As the second factor inside the summation is bounded by $ M_2f(x), $ in view
of Proposition 2.2 we have, whenever $ \Re(\alpha) > \frac{n-1}{2}$, the
estimate
$$ |\frac{J_{\alpha}(t\sqrt{H})}{(t\sqrt{H})^{\alpha}}f(x)|\leq $$
$$ C(\alpha)\left(\sum_{k=-\infty}^\infty (2^kt^{-1})^{n/2}
(1+2^k t^{-1})^{-\Re{\alpha}-1/2}\right) M_2f(x).$$
Thus we are left with proving that
$$ G(t):=\sum_{k=-\infty}^\infty (2^kt^{-1})^{n/2}(1+2^k t^{-1})^{-\Re{\alpha}-1/2}$$ is a uniformly bounded function  of $ t>0.$
Note that $G(2^{i}t)=G(t)$, for all $ i\in\Z $ and hence it is enough to
prove the boundedness of $G$ on the interval $[1,2].$ But  $G$ is a continuous function
on $[1,2]$ as the series converges uniformly on this interval
when $\Re{\alpha}>\frac{n-1}{2}.$

This proves part (i) of Theorem 2.1. To prove the second part we proceed as
above and use the second estimate of Proposition 2.2 which is valid when
$ \Re{(\alpha)} > n-\frac{1}{2}.$ The details are left to the reader.

We now turn our attention to the proof of Proposition 2.2.  We will treat the
cases $ rt^{-1} \leq 1 $ and $ rt^{-1} > 1 $ separately. In the former case
we only need to show that
$$ \int_{|x-y|>r}|K_t^{\alpha}(x,y)|^2dy\leq C(\alpha) t^{-n}. $$
Since
$$ \int_{|x-y|>r}|K_t^{\alpha}(x,y)|^2dy\leq
\int_{\R^n}|K_t^{\alpha}(x,y)|^2dy,$$
we will actually estimate the second integral in the above inequality.
Recalling the definition of $ K_t^\alpha(x,y) $ and using the orthogonality
of the Hermite functions we see that
$$ \int_{\R^n} |K_t^{\alpha}(x,y)|^2dy = \sum_{k=0}^\infty
 \bigg{|}\frac{J_{\alpha}(t\sqrt{2k+n})}{({t\sqrt{2k+n})}^{\alpha}}\bigg{|}^2
\Phi_k(x,x).$$
Splitting the sum into two parts we first consider the sum
$$ \sum_{j = 1}^{\infty} \sum_{2^{-j} < t \sqrt{2k+n}\leq 2^{-j+1}}\bigg{|}
\frac{J_{\alpha}(t\sqrt{2k+n})}{({t\sqrt{2k+n})}^{\alpha}}\bigg{|}^2
\Phi_k(x,x).$$
As $|\frac{J_{\alpha}(s)}{s^{\alpha}}|\leq c(\alpha)$ \cite{GW}, where $c(\alpha)$
is an admissible function of $\alpha,$ for all
$ s \geq 0 $ the above sum is bounded by
$$ c(\alpha)^2 \sum_{j = 1}^{\infty} \sum_{2^{-j} < t \sqrt{2k+n}\leq 2^{-j+1}}
\Phi_k(x,x).$$
Finally, we make use of the estimate $ \Phi_k(x,x) \leq C (2k+n)^{n/2-1} $ proved in \cite{ST}
(see Lemma 3.2.2, Chapter 3  )  valid for $ n \geq 2 $ to see that the above sum
is bounded by
$$ \sum_{j = 1}^{\infty} (2^{-2j}t^{-2})^{n/2} \leq C t^{-n} $$
which takes care of the first sum.

To estimate the second sum, namely
$$ \sum_{j=0}^\infty \sum_{t\sqrt{2k+n}\sim2^j}\bigg{|}
\frac{J_{\alpha}(t\sqrt{2k+n})}{({t\sqrt{2k+n})}^{\alpha}}\bigg{|}^2
\Phi_k(x,x) $$
we make use of the estimate
$$ \bigg{|}\frac{J_{\alpha}(t\sqrt{2k+n})}{({t\sqrt{2k+n})}^{\alpha}}\bigg{|}
\leq c(\alpha) (t\sqrt{2k+n})^{-\Re{(\alpha)}-1/2},$$ when $t\sqrt{2k+n}\geq 1.$ As before this leads to
the estimate
$$ c(\alpha)^2 \sum_{j=0}^\infty \sum_{2^j < t \sqrt{2k+n}\leq 2^{j+1}}
(t\sqrt{2k+n})^{-2\Re{(\alpha)}-1}(2k+n)^{\frac{n}{2}-1}.$$
On simplifying this sum further we get the estimate
$$c(\alpha)^2 t^{-n} \sum_{j=0}^\infty 2^{-2j(\Re{(\alpha)}+\frac{1-n}{2})}.$$
The sum over $j$ converges if and only if $\Re{\alpha}>\frac{n-1}{2}.$ This
takes care of the second sum. Thus we have proved the required estimate
when $ rt^{-1} \leq 1.$

We now treat the second case namely when $ rt^{-1} > 1.$ We estimate the
integral when $ \Re{(\alpha)} > \frac{n-1}{2} $ first. Note that it is
enough to prove the estimate
$$ \int_{|x-y|>r}|K_t^{\alpha}(x,y)|^2dy\leq c(\alpha)t^{-n+2m}r^{-2m}$$
for some integer $ m > \Re{(\alpha)}+\frac{1}{2}.$
Since
$$ \int_{|x-y|>r}|K_t^{\alpha}(x,y)|^2dy\leq r^{-2m}\int ||x-y|^m
K_t^{\alpha}(x,y)|^2dy, $$
it is enough to prove
$$ \int |(x-y)^{\beta} K_t^{\alpha}(x,y)|^2dy\leq c(\alpha)t^{-n+2m}$$
for all $\beta\in\N^n$ with $|\beta|=m.$
In order to do this we make use of Lemma $3.2.3$ in  \cite{ST}, which we state
below for the convenience of the reader.

Given a function $ \psi $ defined on $ [0,\infty) $  consider the kernel
$ M_\psi $ defined by
$$ M_\psi(x,y) = \sum_{\mu \in \N^n} \psi(|\mu|) \Phi_\mu(x)\Phi_\mu(y).$$
Let $ \Delta \psi(s) = \psi(s+1)-\psi(s) $ be the forward finite difference
and let $ \Delta^k\psi $ be defined inductively. Let $ \Delta^kM_\psi $ stand
for the kernel $ M_{\Delta^k\psi}.$ We also define $ B_j=-\partial_{y_j}+y_j,
$ and $ A_j=-\partial_{x_j}+x_j$ for $j=1,2,.......n.$ For multi-indices
$ \mu, A^\mu, B^\mu $ are defined in the usual manner. With these notations
we have

\begin{lem} For any multi-index $ \beta \in \N^n $ we have
$$ (x-y)^\beta M_\psi(x,y) = \sum_{\gamma,\mu}C_{\gamma, \mu}
(B-A)^{\gamma}\triangle^{|\mu|} M_{\psi}(x,y),$$
where the sum is extended over all multi-indices $ \mu $ and $ \gamma $
satisfying $ 2\mu_j-\gamma_j = \beta_j, \mu_j\leq \beta_j.$
\end{lem}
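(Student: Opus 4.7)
The plan is to argue by induction on $|\beta|$, with the base case $|\beta|=1$ computed directly from the raising/lowering action of the Hermite operators and the inductive step handled by the commutator relations between $x_j-y_j$ and the operators $B_k-A_k$. For $\beta=e_j$, I would expand
\[
(x_j-y_j)\Phi_\mu(x)\Phi_\mu(y)
\]
using $x_j\Phi_\mu(x)=\sqrt{(\mu_j+1)/2}\,\Phi_{\mu+e_j}(x)+\sqrt{\mu_j/2}\,\Phi_{\mu-e_j}(x)$ and the corresponding formula in $y$. After shifting the summation index $\mu\to\mu+e_j$ in the piece involving $\Phi_{\mu-e_j}$, the two contributions recombine into a single sum over the antisymmetric combination $\Phi_{\mu+e_j}(x)\Phi_\mu(y)-\Phi_\mu(x)\Phi_{\mu+e_j}(y)$, with coefficient proportional to the forward difference $\Delta\psi(|\mu|)=\psi(|\mu|+1)-\psi(|\mu|)$. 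Since this antisymmetric combination equals $(2(\mu_j+1))^{-1/2}(A_j-B_j)\Phi_\mu(x)\Phi_\mu(y)$, collecting constants yields
\[
(x_j-y_j)M_\psi(x,y)=\tfrac{1}{2}(B_j-A_j)\,\Delta M_\psi(x,y),
\]
which is the lemma for $\beta=e_j$ with $\mu=\gamma=e_j$ and $C_{e_j,e_j}=1/2$.

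For the inductive step the key inputs are $[x_j,A_k]=\delta_{jk}$, $[y_j,B_k]=\delta_{jk}$, and $[x_j,B_k]=[y_j,A_k]=0$, which combine into $[x_j-y_j,\,B_k-A_k]=-2\delta_{jk}$. Because the $B_k-A_k$ pairwise commute, iterating gives
\[
[x_j-y_j,\,(B-A)^\gamma]=-2\gamma_j(B-A)^{\gamma-e_j}.
\]
Applying $(x_j-y_j)$ to the inductive hypothesis and commuting past $(B-A)^\gamma$ produces a commutator contribution $-2\gamma_j(B-A)^{\gamma-e_j}\Delta^{|\mu|}M_\psi$ and a through-contribution $(B-A)^\gamma(x_j-y_j)\Delta^{|\mu|}M_\psi$. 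Since $\Delta^{|\mu|}M_\psi=M_{\Delta^{|\mu|}\psi}$, applying the base case to the multiplier $\Delta^{|\mu|}\psi$ converts the through-term into $\tfrac{1}{2}(B-A)^{\gamma+e_j}\Delta^{|\mu|+1}M_\psi$. The two resulting index pairs, $(\gamma+e_j,\mu+e_j)$ and $(\gamma-e_j,\mu)$, both satisfy $2\mu'_i-\gamma'_i=(\beta+e_j)_i$ and $\mu'_i\le(\beta+e_j)_i$, which closes the induction and yields an explicit recursion for the constants $C_{\gamma,\mu}$.

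The main bookkeeping hurdle is pinning down the factor $1/2$ and the sign in the base case; once the identity $(x_j-y_j)M_\psi=\tfrac{1}{2}(B_j-A_j)\Delta M_\psi$ is in hand, the commutator calculus of the inductive step is essentially forced, and the two index moves $(\gamma,\mu)\mapsto(\gamma+e_j,\mu+e_j)$ and $(\gamma,\mu)\mapsto(\gamma-e_j,\mu)$ are exactly what the constraint $2\mu_j-\gamma_j=\beta_j$ demands.
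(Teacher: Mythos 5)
Your proof is correct. Note that the paper itself does not prove this lemma: it is quoted verbatim from Lemma 3.2.3 of Thangavelu's book \cite{ST}, so there is no in-paper argument to compare against; your derivation (the one-coordinate identity $(x_j-y_j)M_\psi=\tfrac12(B_j-A_j)\Delta M_\psi$ obtained from the three-term recurrence $x_j\Phi_\mu=\sqrt{(\mu_j+1)/2}\,\Phi_{\mu+e_j}+\sqrt{\mu_j/2}\,\Phi_{\mu-e_j}$ and an index shift, followed by induction using $[x_j-y_j,(B-A)^\gamma]=-2\gamma_j(B-A)^{\gamma-e_j}$) is essentially the standard argument behind the cited result, and all the constants, signs, and the bookkeeping of the admissible pairs $(\gamma,\mu)$ with $2\mu_j-\gamma_j=\beta_j$, $\mu_j\le\beta_j$ check out.
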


Let us fix $ \beta \in \N^n $ with $ |\beta| = m .$ In view of the above
lemma
$$(x-y)^{\beta} K_t^{\alpha}(x,y)= \sum_{\gamma,k}c_{\gamma k}
(B-A)^{\gamma}\triangle^k M_{\psi}(x,y),$$
where $ \psi(|\mu|)=
\frac{J_{\alpha}(t\sqrt{2|\mu|+n})}{({t\sqrt{2|\mu|+n})}^{\alpha}} $
and the sum is extended over all $ \gamma $ and $ k $ with
$|\gamma|=2k-m, k\leq m .$
On expanding $(B-A)^{\gamma}$ the above becomes a finite linear combination of
terms of the following form:
$$\sum_{\mu}\triangle^k\psi(|\mu|)A^{\tau}\Phi_{\mu}(x)B^{\sigma}
\Phi_{\mu}(y),$$
where $|\tau|+|\sigma|=|\gamma|.$ By mean value theorem we can write
$$\triangle^k\psi(|\mu|)=\int_{0}^{1}....\int_{0}^{1}
\psi^{(k)}(|\mu|+s_1 +.......+ s_k)ds_1 ds_2....ds_k,$$
and hence  it is enough to prove that
$$ \int_{\R^n} |\sum_{\mu}\psi^{(k)}(|\mu|)A^{\tau}\Phi_{\mu}(x)B^{\sigma}
\Phi_{\mu}(y)|^2 dy \leq C t^{-n+2m} $$
for each $ \tau, \sigma $ and $ k $ as above.

We make use of the facts
$$ A_j\Phi_{\mu}(x)=(2|\mu_j|+2)^{\frac{1}{2}}\Phi_{\mu+ e_j}(x),~~~
 B_j\Phi_{\mu}(y)=(2|\mu_j|+2)^{\frac{1}{2}}\Phi_{\mu+ e_j}(y) $$
(see \cite{ST}) where $ e_j $ are the co-ordinate vectors. In view of this
the above integral is dominated by
$$\sum_{N = 0}^\infty |\psi^{(k)}(N)|^2(2N+n)^{|\tau|+|\sigma|}
\Phi_{N+|\tau|}(x,x).$$
Again, if we use the estimate $\Phi_N(x,x)\leq C (2N+n)^{\frac{n}{2}-1}$ and
the fact that $|\tau|+|\sigma|= 2k-m$ the above is dominated by
$$\sum_{N =0}^\infty |\psi^k(N)|^2(2N+n)^{2k-m +\frac{n}{2}-1}.$$
Now recall that $\psi(N)=
\frac{J_{\alpha}(t\sqrt{2N+n})}{({t\sqrt{2N+n})}^{\alpha}}$, so that
$\psi^{(k)}(N)=\frac{d^k}{d\lambda^k}
\frac{J_{\alpha}(t\sqrt{\lambda})}{({t\sqrt{\lambda})}^{\alpha}}
\large|_{\lambda=2N+n}.$
By making use of the well known relation
$$ \frac{d}{d\lambda}\frac{J_{\alpha}(\sqrt{\lambda})}
{({\sqrt{\lambda})}^{\alpha}}
=-\frac{1}{2}\frac{J_{\alpha+1}(\sqrt{\lambda})}
{({\sqrt{\lambda})}^{\alpha+1}},$$ (see \cite{GW})
we get
$$ \psi^{(k)}(N)= t^{2k} \frac{J_{\alpha+k}(t\sqrt{2N+n})}
{({t\sqrt{2N+n})}^{\alpha+k}}.$$ Plugging this in
the above expression we get
$$\int |\sum_{\mu}\psi^{(k)}(|\mu|)A^{\tau}\Phi_{\mu}(x)B^{\sigma}
\Phi_{\mu}(y)|^2 dy $$
$$ \leq C \sum_{N=0}^\infty \bigg|t^{2k}\frac{J_{\alpha+k}(t\sqrt{2N+n})}
{({t\sqrt{2N+n})}^{\alpha+k}}\bigg|^2(2N+n)^{2k-m +\frac{n}{2}-1}.$$

As before we estimate the above sum by splitting it into two parts. For the
part
$$ \sum_{j =1}^\infty \sum_{t\sqrt{2N+n}\sim 2^{-j}}
\bigg|t^{2k}\frac{J_{\alpha+k}(t\sqrt{2N+n})}{({t\sqrt{2N+n})}^{\alpha+k}}
\bigg|^2(2N+n)^{2k-m +\frac{n}{2}-1}$$
we use the boundedness of the Bessel function which results in the estimate
$$ c_k(\alpha)^2 \sum_{j =1}^\infty t^{4k}
\left(\frac{2^{-2j}}{t^2}\right)^{2k-m +\frac{n}{2}}$$
$$ = c_k(\alpha)^2 t^{-n+2m}\sum_{j = 1}^\infty 2^{-2j(2k-m)}2^{-nj}.$$
Since $2k-m=|\gamma|\geq 0$ the above sum clearly converges.
To treat the sum
$$ \sum_{j =0}^\infty \sum_{t\sqrt{2N+n}\sim 2^j}\bigg|t^{2k}\frac{J_{\alpha+k}(t\sqrt{2N+n})}{({t\sqrt{2N+n})}^{\alpha+k}}\bigg|^2
(2N+n)^{2k-m +\frac{n}{2}-1} $$
we make use the estimate
$$ |\frac{J_{\alpha+k}(t\sqrt{2N+n})}{({t\sqrt{2N+n})}^{\alpha+k}}|\leq
c_k(\alpha)(t\sqrt{2N+n})^{-\Re{(\alpha+k)}-\frac{1}{2}}.$$
Using the above estimate and simplifying  we get
$$ c_k(\alpha)^2\sum_{j =0}^\infty t^{4k}2^{-2j(\Re{(\alpha+k)}+\frac{1}{2})}
\left(\frac{2^{2j}}{t^2}\right)^{2k-m +\frac{n}{2}} $$
$$=c_k(\alpha)^2 t^{-n+2m}\sum_{j=0}^\infty
2^{-2j(\Re{(\alpha)}+\frac{1-n}{2}))}2^{2j(k-m)}.$$
As $k\leq m,$ the sum over $j$ is finite as soon as
$\Re{(\alpha)}>\frac{n-1}{2}.$

Thus Proposition 2.2 (i) is completely proved when $ \Re{(\alpha)} >
\frac{n-1}{2}.$ What remains to be considered is the second part for $ \Re{(\alpha)}>
n-1/2. $ Here also we consider two cases, when  $rt^{-1}\leq 1$ and $ rt^{-1} > 1.$
When $rt^{-1}\leq 1,$ it is enough to show that
$$\sup_{|x-y|>r}|K_t^{\alpha}(x,y)|\leq c(\alpha)t^{-n},$$ for $\Re{\alpha}>n-1/2.$
Clearly, $$\sup_{|x-y|>r}|K_t^{\alpha}(x,y)|\leq \sup_{x,y\in\R^n}|K_t^{\alpha}(x,y)|.$$
So it suffices to show that $$\sup_{x,y\in\R^n}|K_t^{\alpha}(x,y)|\leq c(\alpha)t^{-n}.$$
Using the definition of $K_t^{\alpha}$ we get that
$$|K_t^{\alpha}(x,y)|\leq \sum_{k}|\frac{J_{\alpha}(t\sqrt{2k+n})}{({t\sqrt{2k+n})}^{\alpha}}||\Phi_k(x,y)|.$$
Since $\Phi_k(x,y)=\sum_{|\beta|=k}\Phi_{\beta}(x)\Phi_{\beta}(y),$
an application of Cauchy-Schwarz inequality gives us
$$|\sum_{|\beta|=k}\Phi_{\beta}(x)\Phi_{\beta}(y)|\leq \sqrt{\Phi_k(x,x)\Phi_k(y,y)}.$$
For $n\geq 2$ we know that \cite{ST} $\sup_{x\in\R^n}\Phi_k(x,x)\leq
(2k+n)^{\frac{n}{2}-1}.$
Using this estimate we get that
$$|K_t^{\alpha}(x,y)|\leq \sum_{k}|\frac{J_{\alpha}(t\sqrt{2k+n})}{({t\sqrt{2k+n})}^{\alpha}}|(2k+n)^{\frac{n}{2}-1}.$$
Now, proceeding as in the previous part i.e. splitting the sum into two
 parts and using the estimates of the Bessel function we get the desired
inequality for $\Re{(\alpha)}>n-1/2.$

When $rt^{-1}>1,$ it is enough to show that
$$\sup_{|x-y|>r}|K_t^{\alpha}(x,y)|\leq c(\alpha)t^{-n+\Re{(\alpha)}+1/2}
r^{-\Re{(\alpha)}-1/2} $$
for $\Re{\alpha}>n-\frac{1}{2}.$ As before we only need to show that
$$\sup_{|x-y|>r}|K_t^{\alpha}(x,y)|\leq c(\alpha) t^{-n+m}r^{-m} $$
for some $ m > \Re{(\alpha)}+1/2 $ which in turn will follow once we
show that
$$ \sum_{|\beta| =m} \sup_{x,y \in \R^n}|(x-y)^{\beta}K_t^{\alpha}(x,y)|\leq
c(\alpha)t^{-n+m}.$$
Keeping the notation same as in the previous part, the above estimate will
follow from the estimates
$$\sup_{x,y\in\R^n}|\sum_{\mu}\psi^{(k)}(|\mu|)A^{\tau}\Phi_{\mu}(x)
B^{\sigma}\Phi_{\mu}(y)|\leq c(\alpha)t^{-n+m},$$
where $ k \leq m $ and $ |\tau|+|\sigma| = 2k-m \geq 0.$

Recalling the action of $ A_j $ and $ B_j $ on Hermite functions we see that
$$ \sum_{|\mu| = N} |A^{\tau}\Phi_{\mu}(x) B^{\sigma}\Phi_{\mu}(y)| \leq
C (2N+n)^{\frac{1}{2}(|\tau|+|\sigma|)}\sqrt{\Phi_{N+m}(x,x)\Phi_{N+m}(y,y)}.$$
Using the fact that $ |\tau|+|\sigma| = 2k-m $ the estimates on
$ \Phi_k(x,x) $ leads to
$$ \sum_{|\mu| = N} |A^{\tau}\Phi_{\mu}(x) B^{\sigma}\Phi_{\mu}(y)| \leq
C (2N+n)^{k-m/2+\frac{n}{2}-1}.$$
Recalling that
$ \psi^{(k)}(N)=t^{2k}\frac{J_{\alpha+k}(t\sqrt{2N+n})}
{({t\sqrt{2N+n})}^{\alpha+k}}$
we need to estimate
$$\sum_{N =0}^\infty \bigg|t^{2k}\frac{J_{\alpha+k}(t\sqrt{2N+n})}
{({t\sqrt{2N+n})}^{\alpha+k}}\bigg| (2N+n)^{k-\frac{m}{2} +\frac{n}{2}-1}.$$
As before splitting the above sum into two parts and using the estimates on
Bessel function we get the required estimate for $\Re{(\alpha)}>n-\frac{1}{2}.$
Thus Proposition 2.2 is completely proved.

In the next section when we try to prove the R-boundedness of $ \lambda
\frac{d}{d\lambda}T_\alpha(\lambda) $ we encounter the family $ H(\lambda)
m_{\alpha+1}(H(\lambda)).$ Hence we require the following maximal theorem
for this family.

\begin{thm} Let $ n \geq 2.$ (i) For $ \Re(\alpha) > \frac{(n+1)}{2}$,
$$ \sup_{\lambda \in \R^*}|H(\lambda)m_{\alpha+1}(H(\lambda))f(x)|
\leq C_2(\alpha)M_2f(x);$$
\newline (ii) For $ \Re(\alpha) > n+\frac{1}{2}$,
$$ \sup_{\lambda \in \R^*}|H(\lambda)m_{\alpha+1}(H(\lambda))f(x)|
\leq C_1(\alpha) Mf(x) $$
where the functions $ C_1 $ and $ C_2 $ are of admissible growth.
\end{thm}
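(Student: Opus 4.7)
The plan is to mirror the proof of Theorem 2.1, replacing the multiplier $m_\alpha$ by $\phi_\alpha(u):=u\,m_{\alpha+1}(u)$. The dilation identity $m(H(\lambda))=\delta_\lambda\,m(\lambda H)\,\delta_\lambda^{-1}$ applied to $m=\phi_\alpha$, together with the scale-invariance of the maximal functions $M$ and $M_2$, reduces the claim to the estimates
$$\sup_{t>0}\bigl|\phi_\alpha(t^2H)f(x)\bigr|\le C_2(\alpha)\,M_2f(x)\qquad\text{and}\qquad\sup_{t>0}\bigl|\phi_\alpha(t^2H)f(x)\bigr|\le C_1(\alpha)\,Mf(x)$$
under the hypotheses $\Re(\alpha)>(n+1)/2$ and $\Re(\alpha)>n+\tfrac{1}{2}$ respectively, where $\phi_\alpha(t^2H)$ is the integral operator with kernel
$$\widetilde K_t^\alpha(x,y)\;=\;\sum_{k=0}^\infty \frac{J_{\alpha+1}(t\sqrt{2k+n})}{(t\sqrt{2k+n})^{\alpha-1}}\,\Phi_k(x,y).$$

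The main step is to establish the natural analogue of Proposition 2.2 for this kernel: for $\Re(\alpha)>(n+1)/2$,
$$\int_{|x-y|>r}|\widetilde K_t^\alpha(x,y)|^2\,dy\;\le\;c(\alpha)\,t^{-n}(1+rt^{-1})^{-2\Re(\alpha)-1},$$
and for $\Re(\alpha)>n+\tfrac{1}{2}$,
$$\sup_{|x-y|>r}|\widetilde K_t^\alpha(x,y)|\;\le\;c(\alpha)\,t^{-n}(1+rt^{-1})^{-\Re(\alpha)-1/2}.$$
Granting these bounds, the dyadic decomposition used to deduce Theorem 2.1 from Proposition 2.2 transfers verbatim. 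The upward shift by $1$ in the threshold on $\Re(\alpha)$ reflects the asymptotics of the new symbol $s\mapsto s^2\,m_{\alpha+1}(s^2)=J_{\alpha+1}(s)\,s^{1-\alpha}$, which is $O(s^2)$ near $s=0$ but only $O(s^{1/2-\Re(\alpha)})$ for $s\ge 1$, one power of $s$ worse than $m_\alpha(s^2)$; consequently the critical dyadic series $\sum_j 2^{j(n+1-2\Re(\alpha))}$ converges precisely when $\Re(\alpha)>(n+1)/2$.

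For $rt^{-1}\le 1$ the proof is a routine adaptation of Proposition 2.2: Hermite orthogonality expresses $\int|\widetilde K_t^\alpha|^2\,dy$ as $\sum_k |\text{symbol}|^2\,\Phi_k(x,x)$; splitting into dyadic blocks $t\sqrt{2k+n}\sim 2^j$ and inserting $\Phi_k(x,x)\le c(2k+n)^{n/2-1}$ together with the Bessel bounds above yields the desired estimate at the expected threshold. The $L^\infty$ bound is handled analogously via $|\Phi_k(x,y)|\le\sqrt{\Phi_k(x,x)\Phi_k(y,y)}$.

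The main obstacle is the case $rt^{-1}>1$, where Lemma 2.3 is applied to $(x-y)^\beta\widetilde K_t^\alpha$ and one must differentiate $\psi(\lambda):=t^2\lambda\,m_{\alpha+1}(t^2\lambda)$ repeatedly. In contrast to the single-Bessel formula $\psi^{(k)}=t^{2k}m_{\alpha+k}$ available in Proposition 2.2, the product rule together with $\tfrac{d}{du}m_\beta(u)=-\tfrac{1}{2}m_{\beta+1}(u)$ produces here a two-term expansion
$$\psi^{(k)}(\lambda)\;=\;t^{2k}\bigl[\,a_k\,m_{\alpha+k}(t^2\lambda)\;+\;b_k\,(t^2\lambda)\,m_{\alpha+k+1}(t^2\lambda)\,\bigr]$$
for suitable constants $a_k,b_k$. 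Each summand is controlled by the standard Bessel bounds $|m_\beta(s^2)|\le c(\beta)\min(1,s^{-\Re(\beta)-1/2})$, and the sum of two such terms contributes only an additional constant of admissible growth. Combining these derivative estimates with the bounds on $A^\tau\Phi_\mu(x)B^\sigma\Phi_\mu(y)$ and $\Phi_N(x,x)$ already used in the proof of Proposition 2.2 yields the claimed decay in $rt^{-1}$, the second summand being the one whose dyadic sum imposes the threshold $\Re(\alpha)>(n+1)/2$ (respectively $\Re(\alpha)>n+\tfrac{1}{2}$ for the $L^\infty$ bound).
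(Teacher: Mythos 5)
Your proposal is correct and follows essentially the same route as the paper: reduce by the dilation identity to the maximal function in $t$, estimate the kernel $\sum_k t^2(2k+n)\,m_{\alpha+1}(t^2(2k+n))\Phi_k(x,y)$ (identical to your $\widetilde K_t^\alpha$) by repeating Proposition 2.2, using boundedness of the symbol for $t\sqrt{2k+n}\le 1$ and the decay $(t\sqrt{2k+n})^{-\Re(\alpha)+1/2}$ for $t\sqrt{2k+n}\ge 1$, and for $rt^{-1}>1$ the same two-term formula for $\psi^{(k)}$ that the paper records. The shifted thresholds $\Re(\alpha)>\frac{n+1}{2}$ and $\Re(\alpha)>n+\frac{1}{2}$ arise exactly as you indicate, so your argument matches the paper's (which leaves these details to the reader).
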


In order to prove this theorem we need an analogue of Proposition 2.2 for
the kernel
$$ \tilde{K_t}^\alpha(x,y) = \sum_{k=0}^\infty t^2(2k+n)
\frac{J_{\alpha+1}(t\sqrt{2k+n})}{(t\sqrt{2k+n})^{\alpha+1}}\Phi_k(x,y).$$
This kernel is estimated just like the kernel $ K_t^\alpha.$ Note that
when $ t\sqrt{2k+n} \leq 1 $ both
$ \frac{J_{\alpha}(t\sqrt{2k+n})}{(t\sqrt{2k+n})^{\alpha}} $ and $
t^2 (2k+n) \frac{J_{\alpha+1}(t\sqrt{2k+n})}{(t\sqrt{2k+n})^{\alpha+1}}$ are
bounded. On the other hand when  $ t\sqrt{2k+n} \geq 1 $
$$ t^2 (2k+n) |\frac{J_{\alpha+1}(t\sqrt{2k+n})}{(t\sqrt{2k+n})^{\alpha+1}}|
\leq C(\alpha)(t\sqrt{2k+n})^{-\Re{(\alpha)}+1/2} $$
and since we are assuming $ \Re{(\alpha)} > \frac{n+1}{2} $ the same
estimates as in Proposition are satisfied by $ \tilde{K_t}^\alpha(x,y).$
This takes care of the part when $rt^{-1}\leq 1.$
Recall that in the proof of Proposition 2.2 when $rt^{-1}> 1$ we need some estimates on the derivative
of the multiplier.
The $ k-$th derivative of the function
$ t^2u m_{\alpha+1}(t^2u)$ at $ |\mu| $ is given by
$$ t^{2k}\left[k\frac{J_{\alpha+k}(t\sqrt{2|\mu|+n})}
{({t\sqrt{2|\mu|+n})}^{\alpha+k}}+ t^2(2|\mu|+n)
\frac{J_{\alpha+1+k}(t\sqrt{2|\mu|+n})}{({t\sqrt{2|\mu|+n})}^{\alpha+1+k}}
\right]$$ which can be estimated in a similar way as in the case of $K_t^{\alpha}.$
We leave the details to the reader.

\section{The R-boundedness of $ T_\alpha(\lambda)$ and $\frac{d}{d\lambda}
T_{\alpha}(\lambda)$}
\setcounter{equation}{0}

\subsection{The R-boundedness of $ m_\alpha(H(\lambda))$}

Making use of the maximal theorem proved in the previous section we will now
prove the required vector valued inequalities for the family
$ T_\alpha(\lambda) = m_\alpha(H(\lambda)).$ Using the result of Proposition 
2.2 it is possible to get the estimate
$$ \left(\int_{|x-y|>r}|K_t^{\alpha}(x,y)|^pdy \right)^{1/p} 
\leq C_2(\alpha) t^{-n/(2p')}
(1+rt^{-1})^{-\Re{(\alpha)}-1/2+n(\frac{1}{p}-\frac{1}{2})} $$
for $ 1 < p \leq 2 $ for $ \Re{(\alpha)}>\frac{n-1}{2}.$ This will lead as 
before to the estimate
$$ \sup_{\lambda \in \R^*} |T_\alpha(\lambda)f(x)| \leq C(\alpha)M_pf(x) $$
whenever $ p \geq 2.$ Unfortunately, this estimate is not good enough to yield 
the required vector-valued inequality for the family $ T_\alpha(\lambda).$ 
What we can prove is the inequality
$$ \| \left(\sum_{j=1}^\infty |T_\alpha(\lambda_j)f_j|^r\right)^{1/r}\|_p 
\leq C \| \left(\sum_{j=1}^\infty |f_j|^r\right)^{1/r}\|_p $$
for all $ r > p \geq 2.$  As we need the case $ r = 2 $ we have to 
proceed in a 
different way using analytic interpolation. We first prove the following:

\begin{prop} For any  $\varphi\in C_c^{\infty}(\R^n) $ the operator $T_{\alpha}(\lambda)$
satisfies the following
$$ \int_{\R^n}|T_{\alpha}(\lambda)f(x)|^2|\varphi(x)| dx\leq c(\alpha)
\int_{\R^n} |f(x)|^2 M\varphi(x)dx$$
 for $\Re{(\alpha)}>\frac{n-1}{2}.$ Moreover, $c(\alpha)$ is an admissible function of
 $\alpha$ and is independent of the choice of $\varphi$ and $\lambda.$
\end{prop}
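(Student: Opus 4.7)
The plan is to establish the weighted $L^2$ inequality by combining a dyadic decomposition in the $y$-variable with the off-diagonal $L^2$ decay of the kernel from Proposition 2.2(i). First I reduce by scaling: using $T_\alpha(\lambda) = \delta_\lambda T_\alpha^{(t)} \delta_\lambda^{-1}$ with $t = |\lambda|^{1/2}$, where $T_\alpha^{(t)}$ denotes the operator with kernel $K_t^\alpha$, together with the dilation invariance of $M$, it suffices to prove
\[ \int |T_\alpha^{(t)} f(x)|^2 |\varphi(x)|\, dx \leq c(\alpha) \int |f(y)|^2 M\varphi(y)\, dy \]
with a constant independent of $t > 0$.

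Fix $t > 0$ and decompose the $y$-integration into the ball $A_{-1}(x) = B(x,t)$ and the dyadic annuli $A_j(x) = \{y : 2^j t < |x-y| \leq 2^{j+1} t\}$ for $j \geq 0$. Set $T_j f(x) = \int_{A_j(x)} K_t^{\alpha}(x,y) f(y)\, dy$. Cauchy--Schwarz on each annulus together with Proposition 2.2(i) gives
\[ |T_j f(x)|^2 \leq \Bigl(\int_{A_j(x)} |K_t^{\alpha}|^2\, dy\Bigr) \Bigl(\int_{A_j(x)} |f|^2\, dy\Bigr) \leq c(\alpha) t^{-n}(1+2^j)^{-2\Re{(\alpha)}-1}\int_{|y-x|\leq 2^{j+1}t} |f(y)|^2\, dy. \]
Multiplying by $|\varphi(x)|$, integrating in $x$, applying Fubini, and using $\int_{|x-y|\leq r} |\varphi(x)|\, dx \leq c\, r^n M\varphi(y)$ with $r = 2^{j+1} t$ yields
\[ \int |T_j f|^2 |\varphi|\, dx \leq c(\alpha)(2^j)^{n-2\Re{(\alpha)}-1} \int |f|^2 M\varphi\, dy. \]

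To assemble the full operator, I apply the weighted Cauchy--Schwarz $|T_\alpha^{(t)} f|^2 = |\sum_j T_j f|^2 \leq C_\eta \sum_{j\geq -1} (1+2^j)^\eta |T_j f|^2$, valid for any $\eta > 0$ since $\sum_j (1+2^j)^{-\eta}$ converges. Integrating against $|\varphi|$ and summing, the total bound becomes a geometric series $\sum_j (2^j)^{n - 2\Re{(\alpha)} - 1 + \eta}$, which converges as soon as $\Re{(\alpha)} > (n-1)/2 + \eta/2$; choosing $\eta > 0$ small enough gives the estimate for any $\Re{(\alpha)} > (n-1)/2$, with the admissibility of the constant inherited from $C_2(\alpha)$. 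The key technical point worth emphasising is that $M\varphi$, rather than the larger $M_2\varphi$, appears on the right precisely because of the order of operations: factoring $|f|^2$ out of Cauchy--Schwarz on each annulus first means that $\varphi$ is subsequently averaged by Fubini in $L^1$ over balls, which by definition produces the Hardy--Littlewood maximal function.
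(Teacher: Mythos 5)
Your argument is correct, and it is genuinely different from the paper's proof. The paper deduces the weighted inequality from the pointwise maximal estimate $T_\alpha^*f \le C_2(\alpha)M_2f$ of Theorem 2.1 together with the Fefferman--Stein lemma; since that lemma is only available for exponents strictly bigger than $1$, this yields the weighted $L^p$ bound only for $p>2$ when $\Re(\alpha)>\frac{n-1}{2}$ (and for $p>1$ only under the stronger hypothesis $\Re(\alpha)>n-\frac12$), so the paper must run Stein's analytic interpolation in $\alpha$ between the exponents $p_0=2+\epsilon$ and $p_1=1+\epsilon$ to land exactly at $p=2$, and then a separate subordination identity for Bessel functions to pass from real to complex $\alpha$. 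You bypass all of this: after the scaling reduction to the kernel $K_t^\alpha$ (legitimate, since $M$ commutes with dilations and a dilate of $\varphi\in C_c^\infty$ is again in $C_c^\infty$), the per-annulus Cauchy--Schwarz followed by Fubini puts the averaging on $\varphi$ in $L^1$ over balls, which is exactly $M\varphi$, instead of on $|f|^2$; the resulting per-annulus bound $(2^j)^{\,n-2\Re(\alpha)-1}$ decays geometrically precisely when $\Re(\alpha)>\frac{n-1}{2}$, and your weighted Cauchy--Schwarz with a small $\eta$ (chosen with $\eta<2\Re(\alpha)+1-n$, so depending on $\Re(\alpha)$ only) absorbs the square of the sum. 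This uses only Proposition 2.2(i), treats complex $\alpha$ directly, and makes Theorem 2.1, the Fefferman--Stein lemma and the interpolation step unnecessary for this proposition; the constant is $C_2(\alpha)$ times quantities depending only on $\Re(\alpha)$ and $n$, so admissibility and independence of $\varphi$ and $\lambda$ are immediate. Two small remarks: for the $j=-1$ piece you should invoke the global bound $\int_{\R^n}|K_t^\alpha|^2\,dy\le C_2(\alpha)t^{-n}$ (the case $rt^{-1}\le 1$ of Proposition 2.2), which you implicitly do; and the summation could be done even more simply by Minkowski's inequality in $L^2(|\varphi|\,dx)$, since the square roots of your per-annulus bounds already form a convergent geometric series, which avoids introducing $\eta$ altogether.
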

\begin{proof} We make use of Theorem 2.1 along with a lemma due to Fefferman
and Stein (see Lemma 1, sec. 3 in \cite{FS}) which states that
$$ \int_{\R^n}  Mf(x)^r|\varphi(x)|dx \leq C_r \int |f(x)|^r M\varphi(x)dx,$$
for any $ r > 1 $ with $ C_r $ independent of $ f $ and $\varphi.$ Therefore,
for $ \Re{(\alpha)} > \frac{n-1}{2} $ we have, from Theorem 2.1,
$$ \int_{\R^n} |T_\alpha(\lambda)f(x)|^{p} |\varphi(x)|dx
\leq C_2(\alpha) \int (M|f|^2(x))^{p/2} |\varphi(x)| dx $$
which upon using Fefferman-Stein Lemma  yields
$$ \int_{\R^n} |T_\alpha(\lambda)f(x)|^{p} |\varphi(x)|dx
\leq C_3(\alpha) \int |f(x)|^{p} M\varphi(x) dx $$
for any $ p > 2.$ Similarly, when $ \Re{(\alpha)}>n-\frac{1}{2}$ we have
$$  \int_{\R^n} |T_\alpha(\lambda)f(x)|^{p} |\varphi(x)|dx
\leq C_4(\alpha)  \int_{\R^n} |f(x)|^{p} M\varphi(x) dx  $$
for all $ p > 1.$ Thus we see that
$$ T_{\alpha}(\lambda): L^p(\R^n, (M\varphi)~~dx)\longrightarrow
L^p(\R^n, |\varphi|~~dx)$$
is bounded for all $p > 2 $ if $\Re{(\alpha)}>\frac{n-1}{2}$
and for all $ p > 1 $ if  $\Re{(\alpha)}>n-\frac{1}{2}.$

We want to apply Stein's anaytic interpolation theorem \cite{ES}(see Chapter 5, Theorem 4.1)
 to the family
$ T_{\alpha}(\lambda).$ It is easy to see that the norm of
$T_{\alpha}(\lambda)$ is independent of $\varphi$ and $\lambda$ in both
cases and is an admissible family of operators in $\alpha.$
Fix $\alpha\in\C$ such that $\Re{\alpha}>\frac{n-1}{2}$ and $\lambda\neq0.$
Let $\delta>0$ be chosen so that  $\Re{\alpha}=\frac{n-1}{2}+\delta.$
Define an analytic family of operators $ S_z $ on the strip
$ S = \{z\in\C: 0 \leq\Re{(z)}\leq 1\}$
by setting
$S_z f=T_{(nz+n-1+\delta)/2}(\lambda).$  Let $ \epsilon = \frac{1}{2}(
-1+\sqrt{1+(8\delta)/n}) $ and take $ p_0 = 2+\epsilon $ and
$ p_1 = 1+\epsilon.$ Then it is clear, from Theorem 2.1, that
$$ \int_{\R^n}|S_{iy}f(x)|^{p_0}|\varphi|~~~dx\leq C_1(iy)\int_{\R^n}|f(x)|^{p_0}M\varphi~~~ dx $$
and
$$\int_{\R^n}|S_{1+iy}f(x)|^{p_1}|\varphi|~~~dx\leq C_2(1+iy)\int_{\R^n}|f(x)|^{p_1}M\varphi~~~dx, $$ where $C_1(iy)$
and $C_2(1+iy)$ are admissible functions and are independent of $\varphi$ and $\lambda.$
By interpolation, it follows that $ S_{\delta/n} $ is bounded from
$ L^{p}(\R^n,(M\varphi)~~dx) $ into $ L^{p}(\R^n, |\varphi|~~ dx) $ where
$ \frac{1}{p} = \frac{1-\delta/n}{2+\epsilon}+\frac{\delta/n}{1+\epsilon}.$ A
simple calculation recalling the definition of $ \epsilon,$ shows that
$ p =2 $ and hence $ S_{\delta/n} = T_{\frac{n-1}{2}+\delta}(\lambda) $ is
bounded from $ L^{2}(\R^n,(M\varphi)~~dx) $ into
$ L^{2}(\R^n, |\varphi|~~ dx) $ which proves the theorem when
$ \alpha $ is real.

When $ \alpha $ is not real we write $ \alpha = \beta+\delta+i\gamma $ where
$ \beta > \frac{n-1}{2}, \delta > 0 $ and make use of the identity
$$ \frac{J_{\beta+\delta+i\gamma}(\sqrt{H(\lambda)})}{(\sqrt{H(\lambda)})^{
\beta+\delta+i\gamma}} = \frac{2^{1-\delta-i\gamma}}{\Gamma(\delta+i\gamma)}
\int_0^1 \frac{J_{\beta}(s\sqrt{H(\lambda)})}{(s\sqrt{H(\lambda)})^\beta}
(1-s^2)^{\delta+i\gamma-1}s^{2\beta+1} ds.$$

\end{proof}

We can now prove the vector valued inequality for
$\{T_{\alpha}(\lambda)\}_{\lambda\in\R^*}$ thus proving the R-boundedness of
$ m(H(\lambda)).$

\begin{thm} Let $T_{\alpha}(\lambda)$ be as defined before. Then for any
choice of $ \lambda _j \in \R^* $ and $ f_j \in L^p(\R^n) $ we have
$$\|\left(\sum_{j=1}^{\infty}|T_{\alpha}(\lambda_j)f_j|^2
\right)^{\frac{1}{2}}\|_p
\leq C \|\left(\sum_{j=1}^{\infty}|f_j|^2\right)^{\frac{1}{2}}\|_p$$
for all $1<p<\infty$ provided  $\Re{(\alpha)}>\frac{n-1}{2}.$
\end{thm}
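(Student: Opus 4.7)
The plan is to deduce the $\ell^2$-valued inequality from the weighted estimate in Proposition 3.1 by a Fefferman--Stein style duality on the exponent $p/2$, and then to reach $1<p<2$ by a second duality onto the formal adjoint family. The case $p=2$ is essentially free: since $m_\alpha(H(\lambda))$ is uniformly bounded on $L^2(\R^n)$ with bound independent of $\lambda$ (from the pointwise bound $|J_\alpha(s)/s^\alpha|\leq c(\alpha)$ and the spectral theorem), one has
$$\Big\|\Big(\sum_j|T_\alpha(\lambda_j)f_j|^2\Big)^{1/2}\Big\|_2^2=\sum_j\|T_\alpha(\lambda_j)f_j\|_2^2\leq c(\alpha)^2\Big\|\Big(\sum_j|f_j|^2\Big)^{1/2}\Big\|_2^2.$$

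For $p>2$, I would fix finitely many $f_j\in L^p(\R^n)$ and $\lambda_j\in\R^*$ and rewrite the left-hand side as $\|\sum_j|T_\alpha(\lambda_j)f_j|^2\|_{p/2}$. By $L^{p/2}$-duality we may choose a nonnegative $\varphi$, approximated by test functions in $C_c^\infty(\R^n)$, with $\|\varphi\|_{(p/2)'}=1$ such that this norm equals $\int\sum_j|T_\alpha(\lambda_j)f_j|^2\varphi\,dx$. Applying Proposition 3.1 termwise (its constant being independent of $\varphi$ and $\lambda$) and then H\"older's inequality yields
$$\int\sum_j|T_\alpha(\lambda_j)f_j|^2\varphi\,dx\leq c(\alpha)\int\sum_j|f_j|^2\,M\varphi\,dx\leq c(\alpha)\Big\|\Big(\sum_j|f_j|^2\Big)^{1/2}\Big\|_p^2\|M\varphi\|_{(p/2)'}.$$
Since $(p/2)'>1$, the Hardy--Littlewood theorem gives $\|M\varphi\|_{(p/2)'}\leq C_p$, which closes the range $p\geq 2$.

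For $1<p<2$, I would pass to the formal adjoint. As $H(\lambda)$ is self-adjoint and $J_\alpha(t)/t^\alpha$ satisfies $\overline{m_\alpha(s)}=m_{\bar\alpha}(s)$ for real $s\geq 0$, one has $T_\alpha(\lambda)^*=m_{\bar\alpha}(H(\lambda))=T_{\bar\alpha}(\lambda)$, which satisfies the same hypothesis $\Re(\bar\alpha)>(n-1)/2$. The standard $L^p(\ell^2)$--$L^{p'}(\ell^2)$ pairing then shows that the desired inequality for $\{T_\alpha(\lambda_j)\}$ on $L^p$ is equivalent to the same inequality for $\{T_{\bar\alpha}(\lambda_j)\}$ on $L^{p'}$, which is covered by the previous paragraph since $p'>2$. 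I do not expect any obstacle in this final step; the genuine work was already done in Proposition 3.1, where admissibility of the constant in $\alpha$ allowed Stein's analytic interpolation to push the weighted bound down to the critical line $\Re(\alpha)>(n-1)/2$.
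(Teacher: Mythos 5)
Your argument is correct and follows essentially the same route as the paper: the $p=2$ case via uniform $L^2$ bounds, the $p>2$ case via $L^{p/2}$-duality combined with Proposition 3.1 and the $L^{(p/2)'}$-boundedness of the Hardy--Littlewood maximal function, and the $1<p<2$ case by duality. The only difference is that you spell out the duality step (identifying $T_\alpha(\lambda)^*=T_{\bar\alpha}(\lambda)$ and using the $L^p(\ell^2)$--$L^{p'}(\ell^2)$ pairing), which the paper merely asserts can be "treated using a duality argument."
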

\begin{proof} When $p=2$ the vector valued inequality follows trivially as
$\frac{J_{\alpha}(\sqrt{(2k+n)|\lambda_j|})}{({\sqrt{(2k+n)|\lambda_j|})}^{\alpha}}$ is uniformly bounded
independent of $j$ and $k$ for any  $\alpha$ with $\Re{\alpha}
\geq -\frac{1}{2}$.
So it follows that $\|T_{\alpha}(\lambda_j)f_j\|_2\leq C\|f_j\|_2,$
where $C$ is independent of $j $ and hence we get the vector valued
inequality for $p=2.$

We will now deal with the case $p\neq2$. Without loss of generality we can
assume that $p>2$ as the case $1<p<2$ can be treated  using a duality
argument. Let $\frac{p}{2}=q.$ Clearly,
$$\|\left(\sum_{j=1}^{\infty}
|T_{\alpha}(\lambda_j)f_j|^2\right)^{\frac{1}{2}}\|_p
= \|\sum_{j=1}^\infty |T_{\alpha}(\lambda_j)f_j|^2\|_q^{\frac{1}{2}}.$$
So, it is sufficient to deal with
$\|\sum_{j=1}^\infty |T_{\alpha}(\lambda_j)f_j|^2\|_q.$
As we know
$$ \|\sum_{j=1}^\infty |T_{\alpha}(\lambda_j)f_j|^2\|_q=
\sup_{\|\varphi\|_{q'}\leq 1,\varphi\in C_c^{\infty}(\R^n)}\bigg|
\int_{\R^n} \sum_{j=1}^\infty |T_{\alpha}(\lambda_j)f_j(x)|^2\varphi(x)dx\bigg| $$
it is enough to estimate the integral on the right hand side. In view
of Proposition 3.1 for $\Re{(\alpha)}>\frac{n-1}{2},$ we have
$$ \int_{\R^n} |T_{\alpha}(\lambda_j)f_j(x)|^2|\varphi(x)| dx \leq C
\int_{\R^n} |f_j(x)|^2 M\varphi(x)~~ dx,$$
where $C$ is independent of $\varphi$ and $\lambda_j.$ Therefore,
$$ \bigg|\int_{\R^n} \sum_{j=1}^\infty |T_{\alpha}(\lambda_j)f_j(x)|^2
\varphi(x) dx \bigg| \leq \sum_{j=1}^\infty \int_{\R^n}
|T_{\alpha}(\lambda_j)f_j(x)|^2 |\varphi(x)| dx $$
$$\leq C \int_{\R^n} \sum_{j=1}^\infty |f_j(x)|^2 M\varphi(x) dx.$$
By applying Holder's inequality to the right hand side of the above  we get
$$\bigg|\int_{\R^n} \sum_{j=1}^\infty |T_{\alpha}(\lambda_j)f_j(x)|^2
\varphi(x) dx\bigg|
\leq C \|\sum_{j=1}^\infty |f_j|^2\|_q \|M\varphi\|_{q'},$$
for $\Re{(\alpha)}>\frac{n-1}{2}.$
Since $q'>1$ and $\|\varphi\|_{q'}\leq 1,$ by  the boundedness of the
Hardy-Littlewood maximal function on $L^{q'}(\R^n),$ we get
$$\sup_{\|\varphi\|_{q'}\leq1,\varphi\in C_c^{\infty}(\R^n)}
\bigg|\int_{\R^n} \sum_{j=1}^\infty |T_{\alpha}(\lambda_j)f_j(x)|^2
\varphi(x)~~dx \bigg| \leq C \|\sum_{j=1}^\infty |f_j|^2\|_q, $$ for
$\Re{(\alpha)}>\frac{n-1}{2}.$ Hence we get the required vector-valued
inequality for $T_{\alpha}(\lambda).$

\end{proof}

\subsection{The R-boundedness of $\lambda \frac{d}{d\lambda}m_{\alpha}
(H(\lambda))$}

In this subsection we prove the vector valued inequality required to
establish the R-boundedness of the family
$ \lambda\frac{d}{d\lambda} m_{\alpha}(H(\lambda)).$ Without loss of
generality we assume that $\lambda > 0$ as the case of $\lambda<0$ follows in
a similar fashion with $\lambda$ replaced by $-\lambda$. The derivative of
$ m_\alpha(H(\lambda)) $ has been calculated in our earlier work \cite{JST},
see Lemma 3.4. It has been shown that $\lambda\frac{d}{d\lambda}
m_\alpha(H(\lambda))$ is a linear combination of terms of the form
$$ A_j^2(\lambda)\int_{0}^1{m'}_\alpha(H(\lambda)+2s\lambda) ds ,~~~~
A_j^{*2}(\lambda)\int_{0}^1{m'}_\alpha(H(\lambda)+2s\lambda )ds $$
and $ H(\lambda){m'}_\alpha(H(\lambda)).$

\begin{thm} The family $ S_\alpha(\lambda) =
\lambda\frac{d}{d\lambda}T_{\alpha}(\lambda)$ satisfies the inequality
$$ \|\left(\sum_{j=1}^{\infty}|S_\alpha(\lambda_j)f_j|^2\right)^{\frac{1}{2}}
\|_p \leq C \|\left(\sum_{j=1}^{\infty}|f_j|^2\right)^{\frac{1}{2}}\|_p $$
for  $1<p<\infty $ when $\Re{\alpha}>\frac{n+1}{2} $ for all choices of
$ \lambda_j \in \R^* $ and $ f_j \in L^p(\R^n).$
\end{thm}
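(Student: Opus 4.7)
The plan is to mirror the strategy used in Theorem 3.2: reduce the vector-valued inequality to a weighted $L^2$ estimate
$$ \int_{\R^n} |S_\alpha(\lambda) f(x)|^2 |\varphi(x)|\, dx \leq c(\alpha) \int_{\R^n} |f(x)|^2 M\varphi(x)\, dx, $$
with $c(\alpha)$ of admissible growth and independent of $\lambda$ and $\varphi \in C_c^\infty(\R^n)$. Once this is available, the case $p > 2$ of the vector-valued inequality follows verbatim from the Fefferman--Stein duality argument of Theorem 3.2; the case $p = 2$ is immediate, since for $\Re\alpha > \tfrac{n+1}{2}$ the scalar multipliers $(2k+n)|\lambda|\, m_{\alpha+1}((2k+n)|\lambda|)$ and their analogues remain uniformly bounded in $k$ and $\lambda$; and $1 < p < 2$ then follows by standard duality.

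To prove the weighted estimate, I would use the decomposition recalled just before the theorem: $\lambda \tfrac{d}{d\lambda} m_\alpha(H(\lambda))$ is a finite linear combination of $H(\lambda) m'_\alpha(H(\lambda))$ and of $A_j^2(\lambda) \int_0^1 m'_\alpha(H(\lambda)+2s\lambda)\, ds$ together with its adjoint involving $A_j^{*2}(\lambda)$. Using the Bessel identity $m'_\alpha(u) = -\tfrac{1}{2} m_{\alpha+1}(u)$, the first term becomes a scalar multiple of $H(\lambda) m_{\alpha+1}(H(\lambda))$, which is controlled directly by Theorem 2.4: for $\Re\alpha > \tfrac{n+1}{2}$ one has $|H(\lambda) m_{\alpha+1}(H(\lambda)) f(x)| \leq C_2(\alpha) M_2 f(x)$ uniformly in $\lambda$. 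Squaring, integrating against $|\varphi|$, and applying the Fefferman--Stein lemma to $|f|^2$ yields the desired weighted bound for this piece.

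For the $A_j^2$ and $A_j^{*2}$ terms I would establish an analogue of Proposition 2.2 for the resulting kernels. Because $A_j \Phi_\mu^\lambda = \sqrt{(2\mu_j+2)|\lambda|}\, \Phi_{\mu+e_j}^\lambda$, two applications of $A_j(\lambda)$ produce a weight of order $(2|\mu|+n)|\lambda|$, matching precisely the $H(\lambda)$ weight arising in the third term. Expanding the kernel in the Hermite basis, applying Lemma 2.3 to generate $(x-y)^\beta$ factors on the off-diagonal piece, and repeating the dyadic split via the Bessel bound $|J_\beta(u)/u^\beta| \leq c_\beta u^{-\Re\beta-1/2}$ together with $\Phi_k(x,x) \leq C(2k+n)^{n/2-1}$, one obtains both estimates of Proposition 2.2 with the threshold shifted to $\Re\alpha > \tfrac{n+1}{2}$, uniformly in $s \in [0,1]$ (since the additive shift $2s$ is absorbed by $2k+n$). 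Integrating in $s$, invoking Fefferman--Stein again, and combining with the first piece completes the weighted inequality.

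The main technical obstacle is controlling the kernels uniformly in the auxiliary parameter $s \in [0,1]$ while extracting the two extra powers of $(2k+n)^{1/2}$ forced by the $A_j^2$ prefactor; these two powers are precisely what raises the Bessel-endpoint from $\tfrac{n-1}{2}$ to $\tfrac{n+1}{2}$, and the dyadic sums close exactly under the hypothesis $\Re\alpha > \tfrac{n+1}{2}$. The passage from real $\alpha$ to complex $\alpha$ with the same lower bound on the real part is carried out exactly as in Proposition 3.1, by invoking Stein's analytic interpolation on the admissible family obtained from the Poisson representation of $J_\alpha$ recalled in the introduction.
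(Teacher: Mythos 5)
Your overall architecture (the decomposition of $\lambda\frac{d}{d\lambda}m_\alpha(H(\lambda))$ from \cite{JST}, the identity $m'_\alpha=-\frac12 m_{\alpha+1}$, reduction to a weighted $L^2$ estimate and the Fefferman--Stein duality argument of Theorem 3.2) matches the paper, but your route from the maximal estimate to the weighted $L^2$ inequality has a genuine gap. For the piece $H(\lambda)m_{\alpha+1}(H(\lambda))$ you propose ``squaring, integrating against $|\varphi|$, and applying the Fefferman--Stein lemma to $|f|^2$''. Since the pointwise bound gives $|H(\lambda)m_{\alpha+1}(H(\lambda))f(x)|^2\leq C\, M(|f|^2)(x)$, this step needs the strong-type weighted inequality $\int Mg\,|\varphi|\,dx\leq C\int |g|\,M\varphi\,dx$, i.e.\ the Fefferman--Stein lemma at exponent $r=1$, which is false (the lemma quoted in the paper requires $r>1$; already $g=\chi_{[0,1]}$, $\varphi=\chi_{[0,R]}$ on $\R$ makes the left side grow like $\log R$ while the right side stays bounded). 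This endpoint obstruction is exactly why Proposition 3.1 exists: the $M_2$ maximal bound yields the weighted estimate only for exponents $p>2$, the $M_1$ bound (valid on the larger range $\Re\alpha>n+\frac12$, part (ii) of the maximal theorems) yields it for $p>1$, and Stein's analytic interpolation in the $\alpha$-parameter is what lands you at $p=2$ with the threshold $\Re\alpha>\frac{n+1}{2}$; the Poisson-type subordination identity then handles complex $\alpha$. Your closing sentence assigns the interpolation only to ``the passage from real to complex $\alpha$'', which is not its role, so as written the crucial $p=2$ weighted bound is not established. The repair is to repeat the proofs of Proposition 3.1 and Theorem 3.2 for each piece, using both parts of the corresponding maximal theorem --- which is precisely what the paper does.

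For the terms $A_j^2(\lambda)\int_0^1 m'_\alpha(H(\lambda)+2s\lambda)\,ds$ (and their $A_j^{*2}$ analogues) you also take a genuinely different route. The paper inserts $H(\lambda)^{-1}H(\lambda)$, uses the result of \cite{JST} that $A_j^2(\lambda)H(\lambda)^{-1}$ is a Calder\'on--Zygmund operator with constants uniform in $\lambda$, invokes the C\'ordoba--Fefferman vector-valued inequality for that factor, and is then left with the family $H(\lambda)m_{\alpha+1}(H(\lambda)+2s\lambda)$, which is diagonal in the scaled Hermite basis and amenable to the Theorem 2.4 machinery. You instead propose estimating the kernel of $A_j^2(\lambda)m_{\alpha+1}(H(\lambda)+2s\lambda)$ directly. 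The power counting you indicate does close at $\Re\alpha>\frac{n+1}{2}$, since $A_j^2(\lambda)$ costs at most a factor of order $(2|\mu|+n)|\lambda|$; but note that the resulting kernel is $\sum_\mu \psi(|\mu|)\,((2\mu_j+2)(2\mu_j+4))^{1/2}\,\Phi^\lambda_{\mu+2e_j}(x)\Phi^\lambda_\mu(y)$, whose coefficients depend on $\mu_j$ (not only on $|\mu|$) and whose indices are shifted, so Lemma 2.3 does not apply as stated: you must either commute $A_j^2$ past $(x-y)^\beta$ and re-estimate the commutator terms, or prove a variant of the lemma. This is plausibly fixable and would be self-contained (no appeal to the CZ result of \cite{JST} or to C\'ordoba--Fefferman), but it is exactly the extra kernel work the paper's factorization is designed to avoid, and in your proposal it is asserted rather than carried out.
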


As we have noted above, $ S_\alpha(\lambda) $ is a linear combination of
several terms. We will show that each term satisfies the above vector valued
inequality. Since we have already taken care of $ m_\alpha(H(\lambda))$ we
will begin with the term $ H(\lambda){m'}_\alpha(H(\lambda)).$ Recalling that
$ m_\alpha(u) = \frac{J_\alpha(\sqrt{u})}{\sqrt{u}^\alpha}, $ in view of
the relation
$\frac{d}{dt}\frac{J_{\alpha}(\sqrt{t})}{\sqrt{t}^{\alpha}}=
\frac{-1}{2}\frac{J_{\alpha+1}(\sqrt{t})}{\sqrt{t}^{\alpha+1}}$
we get
$$  H(\lambda){m'}_\alpha(H(\lambda))=
-\frac{1}{2}H(\lambda)m_{\alpha+1}(H(\lambda)).$$
The required maximal theorem for this family has been proved at the end of
the previous section. The R-boundedness of this family can now be proved
repeating the proofs of Proposition 3.1 and Theorem 3.2.

We will now sketch the proof of the vector valued inequality for
the remaining terms. We will only consider the term
$$ A_j^2(\lambda)\int_{0}^1 {m'}_\alpha(H(\lambda)+2s\lambda)  ds,$$
as the other one can be treated similarly. As observed above
$$ {m'}_\alpha(H(\lambda)+2s\lambda) = -\frac{1}{2} m_{\alpha+1}
(H(\lambda)+2s\lambda)$$ and hence we have to consider
$$ A_j^2(\lambda)H(\lambda)^{-1} \int_{0}^1 H(\lambda) m_{\alpha+1}
(H(\lambda)+2s\lambda) ds .$$
As was shown in \cite{JST} the operator $ A_j^2(\lambda)H(\lambda)^{-1} $
turns out to be a Calderon-Zygmund singular integral operator whose CZ
constants are uniform in $ \lambda.$ Hence by a theorem of Cordoba and
Fefferman \cite{CF} the family $ A_j^2(\lambda)H(\lambda)^{-1} $ satisfies
a vector valued inequality. (See Theorem 2.1 in \cite{JST})

Finally we are left with the operator family
$$ \int_{0}^1 H(\lambda) m_{\alpha+1}(H(\lambda)+2s\lambda) ds $$
and hence it is enough to show that the family
$$ H(\lambda) m_{\alpha+1}(H(\lambda)+2s\lambda) $$
is R-bounded uniformly in $ s \in (0,1).$ But the treatment of this is very
similar to that of $  H(\lambda) m_{\alpha+1}(H(\lambda)) $ which we
considered before using the maximal Theorem 2.3. Once again we leave the
details to the reader.
This completes the proof.


\begin{thebibliography}{99}

\bibitem{FS} C. Fefferman and E. M. Stein, \textit{Maximal Inequalities},
American Journal of Mathematics {93, no.1} (1971), 107-115.

\bibitem{CF} A. Cordoba and C. Fefferman, \textit{A weighted norm inequality for singular integral operators}
Studia Mathematica (1976), 97-101.


\bibitem{JST} K. Jotsaroop, P. K. Sanjay and S. Thangavelu,
 \textit{Riesz Transform and multipliers for Grushin Operator},
J. Analyse Math. (to appear).

\bibitem{RM} R. Meyer, \textit{$L^p $ estimates for the Grushin operator},
arXiv:0709.2188 (2007).


\bibitem{AM} A. Miyachi, \textit{On some estimates for the wave equation in $L^p$ and $H^p,$}
J. Fac. Sci. Univ. Tokyo Sect. IA Math.{ 27}(1980), 331-354.


\bibitem{MS} D. Mueller and E. M. Stein, \textit{$L^p$ estimates for the wave equation on the Heisenberg group},
Revista Matematica Iberoamericana, { 15,no. 2}(1999), 297-334.

\bibitem{NT} E. K. Narayanan and S. Thangavelu, \textit{Oscillating Multipliers for some Eigenfunction Expansions},
Journal of Fourier Analysis and Applications,  Vol. 7, Issue 4, (2001), 
373-394.

\bibitem{NS}E. K. Narayanan and S. Thangavelu, \textit{Oscillating Multipliers on the Heisenberg Group,}
Colloquium Mathematicum, Vol. 90, Issue no.1, (2001), 37-50.

\bibitem{JP} J. Peral, \textit{$L^p$ estimates for the wave equation},
Journal of Functional Analysis, {36}(1980), 114-145.


\bibitem{ES} E. M. Stein, \textit{Introduction to Fourier Analysis on Euclidian Spaces}, Princeton University Press (1971).

\bibitem{ES2} E. M. Stein, \textit{Singular Integrals and Differentiability Properties of Functions}, Princeton University Press (1970).

\bibitem{RS} R. Strichartz, \textit{Convolution with kernels having singularities on a sphere,} Transactions of the American Mathematical Society, Vol. 148, (1970), 461-471.

\bibitem{ST} S. Thangavelu, \textit{Lectures on Hermite and Laguerre
expansions}, Math. Notes No. 42, Princeton Univ. Press (1993).

\bibitem{GW}G. M. Watson, \textit{A Treatise on the theory of Bessel functions}, Cambridge University Press (1922).

\bibitem{LW} L. Weis, \textit{Operator valued Fourier multiplier theorems and
maximal $ L^p $ regularity}, Math. Ann. 319 (2001), 735-758.


\bibitem{JZ} J. Zhong, \textit{Harmonic Analysis for some Schrodinger Operators}, Princeton University thesis.









\end{thebibliography}
\end{document}